\begin{document}

\newtheorem{theorem}[subsection]{Theorem}
\newtheorem{proposition}[subsection]{Proposition}
\newtheorem{lemma}[subsection]{Lemma}
\newtheorem{corollary}[subsection]{Corollary}
\newtheorem{conjecture}[subsection]{Conjecture}
\newtheorem{prop}[subsection]{Proposition}
\numberwithin{equation}{section}
\newcommand{\mr}{\ensuremath{\mathbb R}}
\newcommand{\mc}{\ensuremath{\mathbb C}}
\newcommand{\dif}{\mathrm{d}}
\newcommand{\intz}{\mathbb{Z}}
\newcommand{\ratq}{\mathbb{Q}}
\newcommand{\natn}{\mathbb{N}}
\newcommand{\comc}{\mathbb{C}}
\newcommand{\rear}{\mathbb{R}}
\newcommand{\prip}{\mathbb{P}}
\newcommand{\uph}{\mathbb{H}}
\newcommand{\fief}{\mathbb{F}}
\newcommand{\majorarc}{\mathfrak{M}}
\newcommand{\minorarc}{\mathfrak{m}}
\newcommand{\sings}{\mathfrak{S}}
\newcommand{\fA}{\ensuremath{\mathfrak A}}
\newcommand{\mn}{\ensuremath{\mathbb N}}
\newcommand{\mq}{\ensuremath{\mathbb Q}}
\newcommand{\half}{\tfrac{1}{2}}
\newcommand{\f}{f\times \chi}
\newcommand{\summ}{\mathop{{\sum}^{\star}}}
\newcommand{\chiq}{\chi \bmod q}
\newcommand{\chidb}{\chi \bmod db}
\newcommand{\chid}{\chi \bmod d}
\newcommand{\sym}{\text{sym}^2}
\newcommand{\hhalf}{\tfrac{1}{2}}
\newcommand{\sumstar}{\sideset{}{^*}\sum}
\newcommand{\sumprime}{\sideset{}{'}\sum}
\newcommand{\sumprimeprime}{\sideset{}{''}\sum}
\newcommand{\sumflat}{\sideset{}{^\flat}\sum}
\newcommand{\shortmod}{\ensuremath{\negthickspace \negthickspace \negthickspace \pmod}}
\newcommand{\V}{V\left(\frac{nm}{q^2}\right)}
\newcommand{\sumi}{\mathop{{\sum}^{\dagger}}}
\newcommand{\mz}{\ensuremath{\mathbb Z}}
\newcommand{\leg}[2]{\left(\frac{#1}{#2}\right)}
\newcommand{\muK}{\mu_{\omega}}
\newcommand{\thalf}{\tfrac12}
\newcommand{\lp}{\left(}
\newcommand{\rp}{\right)}
\newcommand{\Lam}{\Lambda_{[i]}}
\newcommand{\lam}{\lambda}
\def\L{\fracwithdelims}
\def\om{\omega}
\def\pbar{\overline{\psi}}
\def\phi{\varphi}
\def\lam{\lambda}
\def\lbar{\overline{\lambda}}
\newcommand\Sum{\Cal S}
\def\Lam{\Lambda}
\newcommand{\sumtt}{\underset{(d,2)=1}{{\sum}^*}}
\newcommand{\sumt}{\underset{(d,2)=1}{\sum \nolimits^{*}} \widetilde w\left( \frac dX \right) }

\theoremstyle{plain}
\newtheorem{conj}{Conjecture}
\newtheorem{remark}[subsection]{Remark}

\makeatletter
\def\widebreve{\mathpalette\wide@breve}
\def\wide@breve#1#2{\sbox\z@{$#1#2$}%
     \mathop{\vbox{\m@th\ialign{##\crcr
\kern0.08em\brevefill#1{0.8\wd\z@}\crcr\noalign{\nointerlineskip}%
                    $\hss#1#2\hss$\crcr}}}\limits}
\def\brevefill#1#2{$\m@th\sbox\tw@{$#1($}%
  \hss\resizebox{#2}{\wd\tw@}{\rotatebox[origin=c]{90}{\upshape(}}\hss$}
\makeatletter

\title[Sharp upper bounds for moments of quadratic Dirichlet $L$-functions]{Sharp upper bounds for moments of quadratic Dirichlet $L$-functions}

\author{Peng Gao}
\address{School of Mathematical Sciences, Beihang University, Beijing 100191, P. R. China}
\email{penggao@buaa.edu.cn}
\begin{abstract}
 We establish unconditional sharp upper bounds of the $k$-th moments of the family of quadratic Dirichlet $L$-functions at the central point for $0 \leq k \leq 2$.
\end{abstract}

\maketitle

\noindent {\bf Mathematics Subject Classification (2010)}: 11M06  \newline

\noindent {\bf Keywords}: moments, quadratic Dirichlet $L$-functions, upper bounds

\section{Introduction}
\label{sec 1}

  Moments of central values of families of $L$-functions have attracted much attention in research as they can be applied to address the non-vanishing issue of these values, which in turn carry significant arithmetic implications. Much progress has been made in late years that largely enhances our understanding of these moments.  A conjecture concerning asymptotic expressions for the moments of various families of $L$-functions has been made by J. P. Keating and N. C. Snaith in \cite{Keating-Snaith02}, in connection with random matrix theory. Another conjecture of J. B. Conrey, D. W. Farmer, J. P. Keating, M. O. Rubinstein and N. C. Snaith in \cite{CFKRS} gives more precise predictions by including lower order terms on the asymptotic behaviors of the moments for certain families of $L$-functions.

   In \cite{R&Sound} and \cite{R&Sound1}, Z. Rudnick and K. Soundararajan developed a simple and powerful method towards establishing lower bounds for rational moments of families of $L$-functions of the conjectured order of magnitude. The method is further extended by M. Radziwi{\l\l} and K. Soundararajan in \cite{Radziwill&Sound1} to all moments larger than the first. On the other hand, an approach due to K. Soundararajan in \cite{Sound01} enables one to obtain the corresponding upper bounds under the generalized Riemann hypothesis (GRH). This approach was further sharpened by A. J. Harper \cite{Harper} to give upper bounds of desired order of magnitude for moments of $L$-functions in many families conditionally.

  In \cite{Radziwill&Sound}, M. Radziwi{\l\l} and K. Soundararajan developed a new principle that allows one to seek upper bounds of all smaller
moments with the knowledge of an upper bound for a particular moment. This principle is further implemented by W. Heap and K. Soundararajan in \cite{H&Sound} to treat the case of lower bounds.

  In this paper, we are interested in the family of quadratic Dirichlet $L$-functions. Asymptotic formulas for the first two moments of this family was obtained by M. Jutila in \cite{Jutila} with the error terms being subsequently improved in \cites{ViTa, DoHo, Young1, sound1, Sono}.  For the purpose of this paper, we are interested in the family given by $\{ L(s, \chi_{8d}) \}$ for $d$ being odd and square-free integers.  Here $\chi_{8d} = \left(\frac{8d}{\cdot} \right)$ is the Kronecker symbol. The study of the above family was initiated by K. Soundararajan in \cite{sound1}, who obtained mollified first two moments of the family to show that at least $87.5\%$ of the members of this family have non-vanishing central values. The third moment of the above family is also obtained for the first time in \cite{sound1}. The error term was improved by M. P. Young in \cite{Young2} for a smoothed version. In \cite{Shen}, Q. Shen obtained the fourth moment of the above family under GRH.

  Asymptotic formulas for all positive real moments of $\{ L(\half, \chi_{8d}) \}$ are conjectured by J. C. Andrade and Keating, J. P. in \cite{Andrade-Keating01}.  Combining the above mentioned work of \cite{Harper, Sound01, R&Sound, R&Sound1, Radziwill&Sound1}, we now know that
\begin{align*}
 X(\log X)^{\frac{k(k+1)}{2}} \ll_k \sumstar_{\substack{ 0<d<X \\ (d,2)=1}}|L(\tfrac{1}{2},\chi_{8d})|^k \ll_k X(\log X)^{\frac{k(k+1)}{2}},
\end{align*}
 where the lower bound above holds for all real $k \geq 1$ unconditionally and the upper bound above holds for all $k \geq 0$ under GRH. Here and throughout the paper, we denote $\sumstar$ for the sum over square-free integers.

  It is our goal in this paper to apply the principal of obtaining upper bounds given in \cite{Radziwill&Sound} by M. Radziwi{\l\l} and K. Soundararajan in our setting. Before we state our result, we would like to recall the principle. Although the principle works for general $L$-functions, we only adapt it for the family $\{ L(s, \chi_{8d}) \}$ for simplicity. For this, we recall that a result of B. Hough \cite{Hough} shows that under GRH,  the quantity $\log |L(\half, \chi_{8d})|$ with $X < d \leq 2X$ for large $X$ is normally distributed with mean $\half \log \log d$ and variance $\log \log d$. On the other hand, it is shown in  \cite{Hough} that the sum
\begin{align}
\label{lambdasum}
 \sum_{n \leq z}\frac{\Lambda(n)\chi_{8d}(n)}{n^{\frac{1}{2}}}
\end{align}
is similarly distributed as $\log |L(\half, \chi_{8d})|$ for $z=X^{1/(\log \log X)^2}$, say. Here $\Lambda(n)$ is the von Mangoldt function. As the contribution of prime powers $\geq 3$ is negligible in the above sum and the contribution of prime squares is about $\log \log X$, we see that
the difference between the expression given in \eqref{lambdasum} and $\log \log X$ is mainly determined by
\begin{align*}
{\mathcal P}(d) = \sum_{p \leq z} \frac{1}{\sqrt{p}} \chi_{8d}(p).
\end{align*}
 Taking exponentials, this implies that the quantity $|L(\half, \chi_{8d})|(\log |d|)^{1/2}\exp(-{\mathcal P}(d))$ is usually small. Thus, for any given real numbers $n>0, 0 < k < 1$, we may estimate the quantity $(|L(\half, \chi_{8d})|(\log |d|^{1/2})^{nk}$ by writing it as
\begin{align*}
\Big (|L(\half, \chi_{8d})|(\log |d| )^{1/2}\Big )^{nk}=|L(\half, \chi_{8d})|^{nk}(\log |d|)^{nk/2}\exp(-nk(1-k){\mathcal P}(d))\cdot \exp(nk(1-k){\mathcal P}(d)).
\end{align*}
 We now recall that Young's inequality asserts that we have $ab\le a^p/p + b^q/q$ for real numbers $a, b \geq 0$ and real numbers $p, q \ge 1$ satisfying $1/p+1/q=1$.  Applying this with $a=|L(\half, \chi_{8d})|^{nk}(\log |d|)^{nk/2}\exp(-nk(1-k){\mathcal P}(d)), b=\exp(nk(1-k){\mathcal P}(d))$ and $p=1/k, q=1/(1-k)$, we see that
\begin{align}
\label{Basicapproach}
\Big (|L(\half, \chi_{8d})|(\log |d| )^{1/2}\Big )^{nk} \leq k|L(\half, \chi_{8d})|^{n}(\log |d|)^{n/2}\exp(-n(1-k){\mathcal P}(d))+(1-k)\exp(kn{\mathcal P}(d)).
\end{align}
  As we expect  $|L(\half, \chi_{8d})|^n(\log |d|)^{n/2}\exp(-n{\mathcal P}(d))$ to be small most of the time, the right side of \eqref{Basicapproach} should be bounded above by $\exp(kn{\mathcal P}(d))$ on average. As expanding $\exp(kn{\mathcal P}(d))$ into Euler products leads to a too long Dirichlet polynomial to estimate, we approximate it by taking a suitably long Taylor expansion to achieve our goal since ${\mathcal P}(d)$ is often small in size.

  We note that the bound given in \eqref{Basicapproach} may be modified to establish similar bounds for general $L$-functions. If we further know that the corresponding $L$-function satisfies $L(\half) \geq 0$, then we may replace $|L(\half)|^n$ on the right side of \eqref{Basicapproach} by $L(\half)^n$ to see that if we have a good understanding of the $n$-th moment (twisted by another character) of the corresponding family of $L$-functions at the central value, then we shall be able to obtain sharper upper bounds for every $m$-th moment of the same family of $L$-functions with $0 \leq m \leq n$. This is precisely what has been carried out by Radziwi{\l\l} and Soundararajan in \cite{Radziwill&Sound} to treat the moments of quadratic twists of $L$-functions attached to elliptic curves, since in this case it is known that the corresponding $L$-functions have non-negative values at $1/2$ and the first moment of the family can be evaluated.

  For the family of quadratic twists of Dirichlet $L$-functions, although our current stage of knowledge is short of determining whether $L(\half, \chi_{8d}) \geq 0$ is always valid, we do know that these values are real so that $L(\half, \chi_{8d})^2 \geq 0$ is always true. Thanks to the work of K. Soundararajan in \cite{sound1}, we also have a good knowledge of the twisted second moment of the same family.  Combining these with the above mentioned principle of Radziwi{\l\l} and Soundararajan, we are therefore able to establish unconditionally the correct order of magnitude of every moment less than the second of the family of quadratic Dirichlet $L$-functions. This is given in the following result.
\begin{theorem}
\label{thmupperbound}
   Unconditionally, for every $0 \leq k \leq 2$, we have
\begin{align}
\label{upperbound}
   \sumstar_{\substack{ 0<d<X \\ (d,2)=1}}|L(\tfrac{1}{2},\chi_{8d})|^k \ll_k X(\log X)^{\frac{k(k+1)}{2}}.
\end{align}
\end{theorem}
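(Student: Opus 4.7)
The cases $k = 0$ and $k = 2$ are already available — the former trivially and the latter via Soundararajan's second-moment asymptotic from \cite{sound1} — so only $0 < k < 2$ remains. Write $k = 2\kappa$ with $\kappa \in (0, 1)$. The decisive unconditional input is that $L(\half, \chi_{8d}) \in \mathbb{R}$, which makes $|L(\half, \chi_{8d})|^{2\kappa} = (L(\half, \chi_{8d})^2)^\kappa$ and allows \eqref{Basicapproach} to be invoked with the genuine exponent $n = 2$. Choosing $z = X^{1/(\log \log X)^2}$, summing \eqref{Basicapproach} over odd squarefree $d \in (X/2, X]$, and running a standard dyadic decomposition over $(0, X]$, the theorem reduces to the two unconditional claims
\begin{align*}
\sumstar_{\substack{0<d<X\\(d,2)=1}} L(\half, \chi_{8d})^2 (\log d) \exp\!\big(-2(1-\kappa){\mathcal P}(d)\big) &\ll X (\log X)^{\kappa(2\kappa+1) + \kappa}, \\
\sumstar_{\substack{0<d<X\\(d,2)=1}} \exp\!\big(2\kappa\,{\mathcal P}(d)\big) &\ll X (\log X)^{\kappa(2\kappa+1) + \kappa},
\end{align*}
since together, after dividing by the universal factor $(\log d)^\kappa$ present on the left side of \eqref{Basicapproach}, they dominate the left side of \eqref{upperbound}.

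For both sums the strategy is to replace the exponential by a truncated Taylor polynomial of degree $K = \lfloor 10 \log \log X \rfloor$, producing a Dirichlet polynomial of the form $\sum_\ell \alpha(\ell)\chi_{8d}(\ell)/\sqrt{\ell}$ of length at most $z^K = X^{o(1)}$ in the variable $d$; the tail beyond $K$ is rendered negligible by the Rankin trick and Stirling's formula, since ${\mathcal P}(d)$ is of typical size $O(\sqrt{\log \log X})$. For the second sum, expanding the polynomial and interchanging summations brings matters to the elementary character-sum bound $\sumstar_{d \le X} \chi_{8d}(n) \ll X \cdot \mathbf{1}_{n = \square} + n^{1/2 + \varepsilon}$ (quadratic reciprocity and P\'olya--Vinogradov), and the diagonal contribution reassembles into $\prod_{p \le z} (1 + 2\kappa^2/p) \asymp (\log X)^{2\kappa^2}$, well within the target.

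For the first sum, the same expansion reduces matters to uniformly estimating, for $\ell$ supported on $z$-smooth squarefree integers with at most $K$ prime factors,
\[
S(\ell) := \sumstar_{\substack{0<d<X\\(d,2)=1}} L(\half, \chi_{8d})^2 (\log d) \chi_{8d}(\ell).
\]
Here I would invoke Soundararajan's twisted second-moment formula from \cite{sound1}, adapted to accommodate the $\log d$ weight through a standard contour shift applied to the shifted twisted moment $\sumstar L(\half + \alpha, \chi_{8d}) L(\half + \beta, \chi_{8d}) \chi_{8d}(\ell)$ near $\alpha = \beta = 0$. Its main term has the shape $X P_\ell(\log X)$ with $P_\ell$ a polynomial of degree $4$ whose coefficients are multiplicative functions of $\ell$ of controlled size, plus a power-saving error uniform for $\ell \le X^{o(1)}$.

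The principal obstacle is this last step: the sign cancellation encoded in $\alpha(\ell) = (-2(1-\kappa))^{\omega(\ell)}$ for squarefree $\ell$ must be preserved through the reassembly, since bounding $\sum_\ell |\alpha(\ell)|\,|S(\ell)|$ by absolute values would yield only the Cauchy--Schwarz-quality bound $X(\log X)^{3\kappa}$, which exceeds the target by the factor $(\log X)^{2\kappa(1-\kappa)}$. Consequently the argument hinges on recombining the product of $\alpha(\ell)$ against the multiplicative coefficients of $P_\ell$ into an Euler product of the expected cancelled size, and on ensuring that the error terms from both Soundararajan's formula and the Taylor truncation remain negligible after this cancellation.
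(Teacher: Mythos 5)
The high-level structure of your argument matches the paper's (reduce to $n=2$ because $L(\tfrac12,\chi_{8d})$ is real, split via a Young-type inequality, replace exponentials by polynomials, and feed the result into Soundararajan's twisted second moment). But there is a genuine gap at the step where you ``replace the exponential by a truncated Taylor polynomial of degree $K=\lfloor 10\log\log X\rfloor$'' and dismiss the discrepancy with ``Rankin trick and Stirling's formula, since $\mathcal{P}(d)$ is of typical size $O(\sqrt{\log\log X})$.'' The obstruction is pointwise, not about tails: $E_K(x)$ does \emph{not} dominate $e^x$ once $x$ exceeds roughly $K/e^2$ (and for the $L^2$-weighted sum you need domination of $e^{-2(1-\kappa)\mathcal{P}(d)}$ precisely when $\mathcal{P}(d)$ is very negative, i.e.\ exactly when the polynomial under-shoots). ``Typical size'' does not control the exceptional $d$ for which $|\mathcal{P}(d)|$ is large, and those $d$ can carry a significant weight because they are not independent of the size of $L(\tfrac12,\chi_{8d})^2$. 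Rankin's trick, as used in the paper (cf.\ \eqref{errorbound}), bounds the error from truncating the length of a Dirichlet polynomial; it does not convert a polynomial that falls below $e^x$ into an upper bound.

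This is precisely what the paper's Lemma~\ref{E1} and Lemma~\ref{E2} are designed to fix, and it is why the paper does not use \eqref{Basicapproach} directly. Lemma~\ref{E2} is a \emph{pointwise} inequality in which the products $\prod_j E_{\ell_j}$ are supplemented by the correction terms $\bigl(e^2 x_{r+1}/\ell_{r+1}\bigr)^{\ell_{r+1}}$, which kick in exactly when some block $x_{r+1}=n\mathcal{P}_{r+1}(d)$ exceeds $\ell_{r+1}/e^2$. Making those corrections affordable is what forces the multi-scale decomposition $P_1,\dots,P_R$ with the cascading choice $\ell_1\approx 200\log\log X$, $\ell_{j+1}\approx 200\log\ell_j$: for each block the prime sum $\sum_{p\in P_j}1/p$ is small relative to $\ell_j$, the Dirichlet polynomial $\mathcal{P}_j(d)^{\ell_j}$ has length $\le X^{1/\ell_j}$, and the bound $\ell_{r+1}!(\ell_{r+1}/10e)^{\ell_{r+1}}e^{70\sum 1/p}\ll e^{-\ell_{r+1}/7}\cdot(\text{powers of }\ell_{r+1})$ works out. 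Your single block over all $p\le X^{1/(\log\log X)^2}$ with $K=10\log\log X$ has $\sum_{p\le z}1/p\sim\log\log X$, which is \emph{not} small compared to $K$, so the analogous Rankin bound and the correction term $(e^2\cdot 2\kappa\mathcal{P}(d)/K)^K$ both fail to be controllable. You would need either to take $K$ substantially larger (closer to $200\log\log X$, and paired with a smaller cutoff $z$) or — as the paper does — cascade over blocks so that the constraint $\sum_{p\in P_j}1/p\le\ell_j/36$ holds at every scale.

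Your concluding remark correctly identifies the other delicate point: after invoking Lemma~\ref{Prop1} one must retain the sign cancellation in $(-2(1-\kappa))^{\Omega(n_j)}$ by reassembling into an Euler product (the paper does this around \eqref{6.2}--\eqref{6.3}, obtaining $\exp\bigl(\bigl(\tfrac{(n(k-1))^2}{2}+n^2(k-1)\bigr)\sum 1/p\bigr)$, which is a negative power of $\log X$). So that part of your plan is sound in spirit but not executed. The essential missing ingredient is the Radziwi{\l\l}--Soundararajan multi-block majorant $\mathcal{N}(d,\alpha)=\prod_j E_{\ell_j}(\alpha\mathcal{P}_j(d))$ together with the correction terms of Lemma~\ref{E2}; without it the ``replace $\exp$ by $E_K$'' step is not a valid upper bound.
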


   We notice that the twisted third moment of the quadratic family of Dirichlet $L$-functions has been evaluated by M. P. Young in \cite{Young2}. Thus, if one assumes that $L(\half, \chi_{8d}) \geq 0$ for all $d$ under consideration (which follows from GRH), then we are able to obtain the following result.
\begin{theorem}
\label{thmupperbound1}
   Assume that $L(\half, \chi_{8d}) \geq 0$ for all odd, square-free $d$. Then the bound given in \eqref{upperbound} is valid for every $0 \leq k \leq 3$. In particular, this is true under GRH.
\end{theorem}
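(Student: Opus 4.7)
The plan is to adapt the proof of Theorem \ref{thmupperbound} using Young's \cite{Young2} asymptotic for the twisted third moment of $\{L(\half,\chi_{8d})\}$ in place of Soundararajan's \cite{sound1} twisted second moment. Because the hypothesis $L(\half,\chi_{8d})\ge 0$ gives $|L(\half,\chi_{8d})|^3 = L(\half,\chi_{8d})^3$, the bound \eqref{Basicapproach} with $n=3$ may be applied with $L^3$ (rather than $|L|^3$) on the right-hand side. Parameterising the target exponent as $m=3k$ with $k\in[0,1]$, the endpoints $k=0$ (trivial) and $k=1$ (covered by Young's asymptotic) are immediate, so we concentrate on $0<k<1$. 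Summing \eqref{Basicapproach} over odd squarefree $d\in(0,X]$ and using $\log|d|\asymp\log X$, it suffices to bound each of
\begin{align*}
S_1 &:=\sumstar_{\substack{0<d<X\\(d,2)=1}} L(\tfrac{1}{2},\chi_{8d})^3(\log|d|)^{3/2}\exp\bigl(-3(1-k)\mathcal P(d)\bigr),\\
S_2 &:=\sumstar_{\substack{0<d<X\\(d,2)=1}} \exp\bigl(3k\mathcal P(d)\bigr),
\end{align*}
by $O_k\bigl(X(\log X)^{3k(3k+1)/2+3k/2}\bigr)$, where $\mathcal P(d)=\sum_{p\le z}p^{-1/2}\chi_{8d}(p)$ and $z=X^{1/(\log\log X)^2}$.

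The next step is to replace each exponential by a Taylor truncation $\mathcal N_K(x):=\sum_{j\le K}x^j/j!$ with $K$ a suitably large multiple of $\log\log X$. Standard Poisson-style moment bounds for $\mathcal P(d)$ over quadratic characters (cf.\ \cite{sound1} and \cite{Young2}), together with Markov's inequality, show that the set of $d\le X$ on which $|c\mathcal P(d)|\ge K/10$ contributes negligibly compared with $X$, so we may work with $\mathcal N_K(c\mathcal P(d))$ in place of $\exp(c\mathcal P(d))$ throughout at an acceptable cost.

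For $S_1$, expand $\mathcal N_K(-3(1-k)\mathcal P(d))=\sum_\ell\alpha_\ell\,\chi_{8d}(\ell)/\sqrt\ell$ as a Dirichlet polynomial supported on squarefree $z$-smooth integers $\ell\le z^K=X^{o(1)}$, and apply Young's twisted third-moment asymptotic \cite{Young2} term by term. The main terms reassemble, via the multiplicativity of Young's local factors and of the coefficients $\alpha_\ell$, into an Euler product over primes $p\le z$; a Mertens-type evaluation of this product yields the required power of $\log X$. For $S_2$, bounding $\mathcal N_K(3k\mathcal P(d))$ in mean-square via the Poisson evaluation of $\sumstar_d\chi_{8d}(n)$ (as in \cite{sound1}) reduces the estimate to a Gaussian-type moment of the shape $\ll X(\log X)^{9k^2/2}$, well within the target size.

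The main obstacle will be the Mertens-type evaluation inside $S_1$: one must verify that Young's local factors, twisted against the Taylor coefficients of $\exp(-3(1-k)\mathcal P)$, produce exactly the $(\log X)$-exponent predicted by the Keating--Snaith heuristic for the $m$-th moment. A secondary technical difficulty is that Young's formula carries a power-saving error of the shape $O(X^{1-\delta}\ell^{A})$, and controlling its total contribution summed over the $\sim z^K=X^{o(1)}$ Dirichlet coefficients requires a careful balance of $z$ and $K$; if a single block of primes $[1,z]$ is insufficient, one partitions it into dyadic sub-blocks and iterates \eqref{Basicapproach} along them, following Harper \cite{Harper} and Heap--Soundararajan \cite{H&Sound}.
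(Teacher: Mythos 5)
Your high-level plan is exactly what the paper intends: replace Soundararajan's twisted second moment with Young's twisted third moment asymptotic, use the nonnegativity hypothesis to write $|L(\tfrac12,\chi_{8d})|^3 = L(\tfrac12,\chi_{8d})^3$, and then run the Radziwi{\l\l}--Soundararajan machinery with $n=3$ instead of $n=2$. The paper omits the proof, stating only that it is ``similar to that of Theorem \ref{thmupperbound}'', so you have correctly identified the two new ingredients.

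However, the detailed execution you describe contains a genuine gap precisely where the paper's method (which you would inherit from the proof of Theorem \ref{thmupperbound}) does real work. You propose to apply the single-block inequality \eqref{Basicapproach}, then replace $\exp(c\mathcal{P}(d))$ by a single Taylor truncation $\mathcal{N}_K(c\mathcal{P}(d))$ with $K\asymp\log\log X$, and to dismiss the discrepancy via Markov's inequality on the event $|c\mathcal{P}(d)|\ge K/10$. Markov controls only the \emph{measure} of that exceptional set, not its \emph{contribution}. On the exceptional set one has $\exp(3k\mathcal{P}(d))$ potentially as large as $\exp\bigl(c\sum_{p\le z}p^{-1/2}\bigr)$, which is $\exp(X^{\varepsilon})$-size for $z=X^{1/(\log\log X)^2}$, and one cannot make the trivial bound times the small measure acceptable; nor is there a pointwise relation between $L^3$ and $\exp(-3(1-k)\mathcal{P}(d))$ that would tame $S_1$ when $\mathcal{P}(d)$ is very negative. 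This is exactly the obstruction that Lemma \ref{E1} and Lemma \ref{E2} are designed to resolve: one decomposes the primes into nested blocks $P_1,\dots,P_R$ of rapidly shrinking lengths $\ell_1>\ell_2^2>\cdots$, uses $E_{\ell_j}(x)\ge e^x$ for $x\le 0$ and $e^x\le(1+e^{-\ell_j}/16)E_{\ell_j}(x)$ for $x\le\ell_j/e^2$, and when some $\mathcal{P}_{r+1}(d)$ is out of range one pays the factor $\bigl(e^2 n\mathcal{P}_{r+1}(d)/\ell_{r+1}\bigr)^{\ell_{r+1}}$, which is a short Dirichlet polynomial whose contribution is genuinely small. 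Propositions \ref{Prop2}, \ref{Prop3}, \ref{Prop4} then carry this through. You mention the multi-block partition at the very end as a ``secondary technical difficulty'' to be invoked ``if a single block of primes is insufficient'', but it is not a fallback: a single block is insufficient, and the multi-block decomposition is the engine of the whole argument. If you restructure the proof so that \eqref{upperboundforLmoment} with $n=3$ is the starting point (in place of \eqref{Basicapproach} plus Markov), and feed Young's twisted third moment into the analogue of Proposition \ref{Prop4}, the proposal becomes the paper's intended proof.
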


   We omit the proof of Theorem \ref{thmupperbound1} in the paper as it is similar to that of Theorem \ref{thmupperbound}. We also notice that using the approach of Radziwi{\l\l} and Soundararajan in \cite{Radziwill&Sound}, we may be able to give an alternative proof of a result of B. Hough \cite[Corollary 1.1]{Hough} that says the distribution of logarithms of central values of $L(\half, \chi_{8d})$ is bounded above by the Gaussian distribution.

  We would like to point out here that in order for the principle of Radziwi{\l\l} and Soundararajan on obtaining upper bounds for moments of $L$-functions to work, one in general needs to evaluate a given moment of $L$-functions twisted by certain character instead of just evaluating the moment itself. Previously, it is known that these twisted moments play vital roles when using mollifiers to study the non-vanishing issues of central values of $L$-functions. They are also necessary in the work of M. P. Young \cite{Young1, Young2} to use a recursive method to reduce the sizes of error terms in the moments of $L$-functions at the central point. Now, the principle of Radziwi{\l\l} and Soundararajan provides a further evidence on the importance of studying these twisted moments.

  In the last section of this paper, we propose a variant of the above mentioned principle of Radziwi{\l\l} and Soundararajan for obtaining upper bounds of moments of $L$-functions. This may potentially lead to a slightly simpler treatment (at least in some cases) when acquiring such  upper bounds.

\section{Preliminaries}
\label{sec 2}

 We include here some tools needed in our proof of Theorem \ref{thmupperbound} together with an initial treatment of the proof.
\subsection{Tools}
 From now on, we reserve the letter $p$ for a prime number and we recall the following well-known Mertens' formula (see \cite[Theorem 2.7]{MVa1}) and a consequence of it (via partial summation).
\begin{lemma} \label{RS} Let $x \geq 2$. We have, for some constant $b$,
$$
\sum_{p\le x} \frac{1}{p} = \log \log x + b+ O\Big(\frac{1}{\log x}\Big).
$$
 Also, for any integer $j \geq 1$, we have
$$
\sum_{p\le x} \frac {(\log p)^j}{p} = \frac {(\log x)^j}{j} + O((\log x)^{j-1}).
$$
\end{lemma}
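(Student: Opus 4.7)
The first estimate is classical Mertens' second theorem, attributed in the statement itself to \cite[Theorem 2.7]{MVa1}, and I would simply invoke that reference. My plan for the second estimate is partial summation from a slightly sharper classical input: the identity
\begin{equation*}
 \Theta(x) := \sum_{p \leq x} \frac{\log p}{p} = \log x + M + O(1),
\end{equation*}
valid for some constant $M$ and itself a standard consequence of Chebyshev-type bounds on $\vartheta(x)$ from \cite{MVa1}. This already handles the case $j = 1$.

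For $j \geq 2$, apply Abel summation to $\Theta$ with the smooth factor $(\log t)^{j-1}$, yielding
\begin{equation*}
 \sum_{p \leq x} \frac{(\log p)^j}{p} = \Theta(x)(\log x)^{j-1} - (j-1) \int_{2}^{x} \Theta(t) \frac{(\log t)^{j-2}}{t} \, dt.
\end{equation*}
Substituting $\Theta(t) = \log t + M + O(1)$ into the boundary and integral, then evaluating the elementary primitive $\int_2^x (\log t)^k / t \, dt = (\log x)^{k+1}/(k+1) + O(1)$ for $k \geq 0$, reduces everything to bookkeeping: the boundary contributes $(\log x)^j + M(\log x)^{j-1} + O((\log x)^{j-1})$, while the integral contributes $(j-1)(\log x)^j/j + M(\log x)^{j-1} + O((\log x)^{j-1})$. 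Subtracting produces the exact cancellation $(\log x)^j[1 - (j-1)/j] = (\log x)^j/j$ together with the $M(\log x)^{j-1}$ terms cancelling, leaving the advertised error $O((\log x)^{j-1})$.

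There is no genuine obstacle in the plan. The only point of mild care is to use the $\Theta(x)$ identity rather than attempt partial summation directly from Mertens' second theorem: the latter route would incur an $O(\log \log x)$ loss from the $O(1/\log t)$ error in $\sum_{p \leq t} 1/p$, one logarithm shy of the claimed bound in the tight $j = 1$ case. Routing the argument through $\Theta(x)$ removes this loss uniformly in $j \geq 1$, after which the entire derivation is a single application of Abel summation followed by a closed-form primitive.
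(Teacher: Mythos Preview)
Your proposal is correct and follows exactly the approach the paper indicates: the paper gives no proof beyond citing \cite[Theorem 2.7]{MVa1} for Mertens' theorem and remarking that the second estimate is ``a consequence of it (via partial summation).'' Your write-up is a faithful and accurate expansion of that hint, and your observation that one should start from $\Theta(x)=\sum_{p\le x}(\log p)/p=\log x+O(1)$ rather than from $\sum_{p\le x}1/p$ is well taken, since the latter route does indeed miss the $O(1)$ bound in the $j=1$ case. One cosmetic point: writing $\Theta(x)=\log x+M+O(1)$ is redundant (the constant $M$ is absorbed into the $O(1)$), so the ``cancellation'' of the $M(\log x)^{j-1}$ terms you highlight is not actually needed---those terms are already inside the claimed error.
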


   Now, we denote $\Phi$ for a smooth, non-negative function compactly supported on $[1/2,5/2]$ with $\Phi(x) =1$
for $x\in [1,2]$, and define, for any complex number $s$,
\begin{equation*}
{\widehat \Phi}(s) = \int_{0}^{\infty} \Phi(x)x^{s}\frac {dx}{x}.
\end{equation*}
   We define $\delta_{n=\square}$ to be $1$ when $n=\square$ and $0$ otherwise, where we write $\square$ for a perfect square. Similar to the proof of \cite[Proposition 1]{Radziwill&Sound}, we have the following result concerning a smoothed sum of quadratic characters.
\begin{lemma} \label{PropDirpoly}  For large $X$ and any odd positive integer $n$, we have
\begin{align*}
\sumstar_{\substack{(d,2)=1}} \chi_{8d}(n) \Phi\Big(\frac{d}{X}\Big)=
\displaystyle \delta_{n=\square}{\widehat \Phi}(1) \frac{2X}{3\zeta(2)} \prod_{p|n} \Big(\frac p{p+1}\Big) + O(X^{\frac 12+\epsilon} \sqrt{n} ).
\end{align*}
\end{lemma}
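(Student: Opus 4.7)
The plan is to mirror Proposition 1 of \cite{Radziwill&Sound}, following the M\"obius inversion plus Poisson summation strategy of \cite{sound1}. First I detect square-freeness by writing $\mu^{2}(d)=\sum_{a^{2}\mid d}\mu(a)$ and substituting $d=a^{2}m$ (both $a,m$ odd). Since $\chi_{8a^{2}m}(n)$ vanishes unless $\gcd(a,n)=1$ and otherwise equals $\chi_{8m}(n)$, the sum becomes
\begin{align*}
\sumstar_{(d,2)=1}\chi_{8d}(n)\Phi(d/X)=\sum_{(a,2n)=1}\mu(a)\sum_{(m,2)=1}\chi_{8m}(n)\,\Phi\!\left(\frac{a^{2}m}{X}\right).
\end{align*}
I then truncate the outer sum at $A=X^{1/2}$; for $a>A$ the trivial bound $\ll X/a^{2}$ on the inner sum gives a total contribution of $O(X^{1/2})$, which is absorbed into the claimed error.

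For each $a\le A$ with $\gcd(a,2n)=1$, I apply a Poisson-type summation formula to the inner sum over odd $m$. After the decomposition $n=n_{1}n_{2}^{2}$ with $n_{1}$ odd and square-free, the function $m\mapsto\chi_{8m}(n)$ restricted to odd $m$ is periodic of period dividing $8n_{1}$, and Poisson produces an identity of the shape
\begin{align*}
\sum_{(m,2)=1}\chi_{8m}(n)\,\Phi\!\left(\frac{a^{2}m}{X}\right)=\frac{X}{2na^{2}}\sum_{k\in\mathbb{Z}}G_{k}(n)\,\widetilde\Phi\!\left(\frac{kX}{2na^{2}}\right),
\end{align*}
where $G_{k}(n)$ is a Gauss-sum-type quantity and $\widetilde\Phi$ is a Fourier-type transform of $\Phi$ with rapid decay. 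One has $G_{0}(n)=c\,\phi(n)\,\delta_{n=\square}$ for an absolute constant $c$, together with the bound $|G_{k}(n)|\ll n^{1/2+\varepsilon}$ for $k\ne 0$.

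The $k=0$ contribution, combined with $\sum_{(a,2n)=1}\mu(a)/a^{2}$ (which I extend to all $a$ at cost $O(X^{1/2})$), produces the main term. The explicit constant $\tfrac{2}{3\zeta(2)}\prod_{p\mid n}\tfrac{p}{p+1}$ drops out after simplifying $\phi(n)/n$ against the Euler product $\prod_{p\nmid 2n}(1-1/p^{2})=\zeta(2)^{-1}\prod_{p\mid 2n}(1-1/p^{2})^{-1}$, while $\widehat\Phi(1)=\int_{0}^{\infty}\Phi(x)\,dx$ is the zero-frequency weight of $\Phi$. For $k\ne 0$, the rapid decay of $\widetilde\Phi$ truncates the sum at $|k|\ll X^{\varepsilon}\max(1,na^{2}/X)$, which together with the Gauss-sum estimate yields a per-$a$ contribution of $O(n^{1/2}X^{\varepsilon})$; summing over $a\le A$ gives the required error $O(X^{1/2+\varepsilon}\sqrt{n})$.

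The main obstacle is executing the Poisson step cleanly: the odd-$m$ restriction effectively doubles the modulus, the decomposition $n=n_{1}n_{2}^{2}$ must be threaded carefully so that $\delta_{n=\square}$ emerges as the indicator of $n_{1}=1$ within $G_{0}(n)$, and the Gauss-sum bound must hold uniformly in $k$ for composite $n$. All of these ingredients have been worked out in \cite{sound1} and \cite{Radziwill&Sound}, so the argument can largely be cited from or closely modelled on those sources.
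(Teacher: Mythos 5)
Your outline is essentially the paper's own argument: the paper proves this lemma simply by invoking Proposition~1 of Radziwi{\l\l}--Soundararajan, whose proof is exactly the M\"obius--inversion--plus--Poisson--summation strategy you describe, with the squarefree detection $\mu^2(d)=\sum_{a^2\mid d}\mu(a)$, the truncation at $a\le X^{1/2}$, Poisson on the inner $m$-sum via the Gauss sums of \cite{sound1}, and the zero-frequency term producing $\delta_{n=\square}$ with the stated Euler product. The only place to be careful is the per-$a$ error accounting for small $a$ (when $na^2\ll X$ the bound ``number of $k$'s times largest term'' is too crude; one needs the full rapid decay of $\widetilde\Phi$ to make the $k\neq0$ terms negligible there), but that is a routine bookkeeping point handled in the cited sources and you flag the need for care yourself.
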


 We denote further $d(n)$ for the divisor function and $\sigma(n)$ for the sum of the positive divisors of $n$. Also define $\Lambda_j(n)$ for all integers $j \geq 0$ to be the coefficient of $n^{-s}$ in the Dirichlet series expansion of $(-1)^{j}\zeta^{(j)}(s)/\zeta(s)$. Note that this implies that $\Lambda_1(n)=\Lambda(n)$ and that $\Lambda_j(n)$ is supported on integers having at most $j$ distinct prime factors such that $\Lambda_j (n) \ll_j n^j$.

  Combining Proposition 1.1 and Proposition 1.3 in \cite{sound1} and setting $Y=X^{1/4}, M=1$ there, we readily deduce the following asymptotic result concerning the twisted second moment of quadratic Dirichlet $L$-functions.
\begin{lemma}
\label{Prop1}
  Writing any odd $l$ as  $l=l_1l^2_2$ with $l_1$ square-free, we have for any $\varepsilon>0$,
\begin{align*}
\begin{split}
 \sumstar_{(d,2)=1}L(\half, \chi_{8d})^2\chi_{8d}(l)\Phi(\frac dX)
=&  \frac{D \widehat{\Phi}(1)}{36\zeta (2)}\frac {d(l_1)}{\sqrt{l_1}}\frac {l_1}{\sigma(l_1)h(l)}X \Big (\log^3\Big (\frac {X}{l_1} \Big )-3\sum_{\substack{p | l_1}} \log^2 p\log \Big ( \frac {X}{l_1} \Big )+O(l) \Big )+O \left(X^{\frac 34+\varepsilon}l^{\half+\varepsilon}_1\right ),
\end{split}
\end{align*}
  where  $D=\frac 18\displaystyle \prod_{\substack{p \geq 3}}\left (1-\frac 1{p} \right )h(p)$ and $h$ is the multiplicative function defined on prime powers by
\begin{align*}
h(p^k)=1+\frac 1{p}+\frac 1{p^2}-\frac 4{p(p+1)}, \quad k \geq 1. 
\end{align*}
  Also,
\begin{align*}
 O(l)=& \sum^3_{j,k=0}\sum_{\substack{ m |l_1}}\sum_{\substack{n |l_1}}
\frac {\Lambda_j(m)}{m} \frac {\Lambda_k(n)}{n}D(m,n)Q_{j,k}\Big(\log \frac {X}{l_1}\Big) -3\Big (A+B\frac {\widehat{\Phi}'(1)}{\widehat{\Phi}(1)}\Big )\sum_{\substack{p | l}} \log^2 p,
\end{align*}
  where $A$ and $B$ are absolute constants and $D(m, n) \ll 1$ uniformly for all $m$ and $n$. The $Q_{j,k}$ are polynomials of degree $\leq 2$ whose coefficients involve only absolute constants and linear combinations of $\frac {\widehat{\Phi}^{(j)}(1)}{\widehat{\Phi}(1)}$ for $1 \leq j \leq 3$.
\end{lemma}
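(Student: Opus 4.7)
Since Lemma \ref{Prop1} is claimed as a direct consequence of Propositions 1.1 and 1.3 of \cite{sound1}, specialized to $Y = X^{1/4}$ and $M = 1$, my plan is to outline how this deduction proceeds. One begins with the approximate functional equation
\[
L(\half,\chi_{8d})^{2} = 2\sum_{n\geq 1}\frac{d(n)\chi_{8d}(n)}{\sqrt n}\,V\!\Big(\frac{\pi n}{8d}\Big),
\]
where $V$ is the standard gamma-factor cutoff (the sign of the functional equation for $\chi_{8d}$ is $+1$). Multiplying by $\chi_{8d}(l)\Phi(d/X)$ and swapping the summations reduces the problem to evaluating character sums of the form $\sumstar_{(d,2)=1}\chi_{8d}(ln)\Phi(d/X)$, to which Lemma \ref{PropDirpoly} applies after Mellin inversion of $V$.

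The diagonal contribution corresponds to $ln$ being a perfect square; writing $l = l_1 l_2^{2}$, this forces $n = l_1 m^{2}$ for some odd $m$, and the resulting Dirichlet series $\sum_{(m,2)=1} d(l_1 m^{2})/m^{2s+1}$ factors into a finite Euler product over primes $p \mid l_1$ times (essentially) $\zeta(2s+1)^{3}/\zeta(4s+2)$, which has a triple pole at $s = 0$ producing the leading $\log^{3}(X/l_1)$ behaviour. Collecting Euler factors at primes dividing $l_1$, namely the $p/(p+1)$ from Lemma \ref{PropDirpoly} together with the local factor of the $d(l_1 m^{2})$-series at these primes, produces the prefactor $d(l_1)/\sqrt{l_1}\cdot l_1/(\sigma(l_1)h(l))$, while the Euler product over odd primes not dividing $l$ collapses to the constant $D/\zeta(2)$. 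The subleading $-3\sum_{p\mid l_1}\log^{2}p\cdot\log(X/l_1)$ correction arises from differentiating this finite Euler product during the residue expansion, since the primes $p\mid l_1$ are precisely where the local factor deviates from the generic one.

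The remaining degree-$\leq 2$ polynomial corrections in $\log(X/l_1)$ are absorbed into $O(l)$; its stated structure, with double sums over $m, n \mid l_1$ weighted by $\Lambda_j(m)\Lambda_k(n)/(mn)$, reflects the Taylor expansion of higher derivatives of these local Euler factors, while the polynomials $Q_{j,k}$ absorb the derivatives $\widehat\Phi^{(j)}(1)/\widehat\Phi(1)$ picked up from the Mellin inversion of the cutoff. The constant $-3(A + B\widehat\Phi'(1)/\widehat\Phi(1))\sum_{p\mid l}\log^{2}p$ records the corresponding second-order correction at primes dividing $l$. The main obstacle throughout is the careful bookkeeping of the Euler products: ensuring the local contribution at each prime $p\mid l_1$ is computed consistently across Lemma \ref{PropDirpoly}, the Dirichlet series factorization, and the residue computation, so that the stated combination of $D$, $h(l)$, $\sigma(l_1)$, and $d(l_1)/\sqrt{l_1}$ emerges exactly as claimed. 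The error $O(X^{3/4+\varepsilon}l_1^{1/2+\varepsilon})$ then arises from the off-diagonal character-sum contribution of Lemma \ref{PropDirpoly} (which costs $X^{1/2+\varepsilon}\sqrt{n}$ per term), summed against the effective length $n \ll X^{1+\varepsilon}$ coming from the approximate functional equation and balanced by the choice $Y = X^{1/4}$ in the splitting of Proposition 1.1 of \cite{sound1}.
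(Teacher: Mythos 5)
Your proposal takes the same route as the paper: the paper's entire proof of Lemma \ref{Prop1} is the one-sentence citation to Propositions 1.1 and 1.3 of \cite{sound1} with $Y=X^{1/4}$, $M=1$, and you correctly identify that. Your supplementary reconstruction of what lies behind Soundararajan's propositions is broadly faithful on the diagonal term: the reduction of $\chi_{8d}(ln)=\square$ to $n=l_1 m^2$, the factorization of $\sum_m d(l_1 m^2)m^{-s}$ into a finite Euler product at $p\mid l_1$ times $\zeta(s)^3/\zeta(2s)$, the triple pole producing $\log^3(X/l_1)$, and the origin of the $-3\sum_{p\mid l_1}\log^2 p$ correction from differentiating the local factors at $p\mid l_1$ are all in the right spirit.

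The one place your sketch would not actually deliver the stated result is the error term. Applying Lemma \ref{PropDirpoly} term by term to the approximate functional equation, with its $O(X^{1/2+\epsilon}\sqrt{ln})$ error, and summing $d(n)/\sqrt{n}$ over the effective range $n\ll X^{1+\epsilon}$ gives an error of size roughly $X^{3/2+\epsilon}\sqrt{l}$ --- two orders of $X^{1/4}$ too large, not $X^{3/4+\epsilon}l_1^{1/2+\epsilon}$. Invoking ``balanced by the choice $Y=X^{1/4}$'' does not close this: the whole content of Soundararajan's Proposition 1.1 is precisely that one cannot handle the off-diagonal contribution by Lemma \ref{PropDirpoly} alone. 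His argument instead applies Poisson summation in $d$ to the non-square $n$ terms, turning the character sum into a dual sum controlled via Gauss sums and a smooth Mellin analysis, and it is this dual-sum estimate (together with the $Y$-truncation of the approximate functional equation) that yields the $X^{3/4+\epsilon}l_1^{1/2+\epsilon}$ error. Since the paper itself defers to \cite{sound1} for all of this, your proposal is correct at the level of what the paper actually proves, but the off-diagonal mechanism in your outline should be replaced by the Poisson-summation step if one wanted a self-contained argument.
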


  Lastly, we define for any non-negative integer $\ell$ and any real number $x$,
\begin{equation}
\label{E_ell}
E_{\ell}(x) = \sum_{j=0}^{\ell} \frac{x^{j}}{j!}.
\end{equation}
  We recall the following two key inequalities from \cite[Lemma 1, Lemma 2]{Radziwill&Sound}.
\begin{lemma} \label{E1}  Let $\ell \geq 0$ be an even integer.  The function $E_{\ell}(x)$ is positive, convex and satisfies $E_{\ell}(x) \ge e^{x}$ for $x\le 0$.  Moreover, for $x\le \ell/e^2$, we have
$$
e^{x} \le \Big(1 + \frac{e^{-\ell}}{16}\Big) E_{\ell}(x).
$$
\end{lemma}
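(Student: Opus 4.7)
The plan is to prove all four assertions using a single tool: Taylor's theorem with integral remainder, combined with the elementary identities $E_\ell'(x) = E_{\ell-1}(x)$ and $E_\ell''(x) = E_{\ell-2}(x)$. Starting from
\[
 e^x - E_\ell(x) = \int_0^x \frac{(x-t)^\ell}{\ell!}\, e^t\, dt,
\]
one can read off positivity and the lower bound $E_\ell(x) \geq e^x$ (for $x \leq 0$) simultaneously. For $x \leq 0$, rewriting the integral as $-\int_x^0$ gives a non-positive quantity because $(x-t)^\ell \geq 0$ for even $\ell$; hence $E_\ell(x) \geq e^x > 0$. For $x \geq 0$, every monomial in $E_\ell$ is non-negative and the constant term contributes $1$, so $E_\ell(x) \geq 1$. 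Convexity then follows immediately from $E_\ell''(x) = E_{\ell-2}(x) \geq 0$ when $\ell \geq 2$, and trivially for $\ell = 0$.

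For the quantitative bound, the case $x \leq 0$ is immediate since $1 + e^{-\ell}/16 \geq 1$ and we already have $e^x \leq E_\ell(x)$ there. For $0 \leq x \leq \ell/e^2$, I would bound the Taylor tail
\[
 e^x - E_\ell(x) = \sum_{j > \ell} \frac{x^j}{j!}
\]
by factoring out $x^\ell/\ell!$ and using $x/(\ell+k) \leq x/(\ell+1) \leq 1/e^2$ to dominate the residual by a geometric series:
\[
 e^x - E_\ell(x) \leq \frac{x^\ell}{\ell!}\sum_{k\geq 1} e^{-2k} = \frac{x^\ell}{\ell!}\cdot \frac{1}{e^2 - 1}.
\]
Applying Stirling's inequality $\ell! \geq (\ell/e)^\ell \sqrt{2\pi\ell}$ together with the hypothesis $x \leq \ell/e^2$ then gives $x^\ell/\ell! \leq e^{-\ell}/\sqrt{2\pi\ell}$, so that
\[
 e^x - E_\ell(x) \leq \frac{e^{-\ell}}{(e^2 - 1)\sqrt{2\pi\ell}}.
\]
Since $E_\ell(x) \geq 1$ for $x \geq 0$, the required inequality $e^x \leq (1 + e^{-\ell}/16) E_\ell(x)$ will follow provided $(e^2 - 1)\sqrt{2\pi\ell} \geq 16$, which already holds at $\ell = 2$ (numerically $6.38 \cdot 3.54 > 22$) and only improves as $\ell$ grows. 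The boundary case $\ell = 0$ reduces to $e^x \leq 17/16$ for $x \leq 0$, which is trivial.

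Nothing here is deep, so I do not anticipate any genuine obstacle; the argument is essentially a clean consequence of Taylor with integral remainder plus Stirling. The only mildly delicate point is keeping track of the numerical constant in the final step, which requires the Stirling bound to have enough slack at the smallest relevant value $\ell = 2$. The hypothesis $x \leq \ell/e^2$ is what makes the ratio $x/(\ell+1)$ small enough for the geometric tail to converge with a comfortable margin, and the evenness of $\ell$ is used exactly once, to ensure $(x-t)^\ell \geq 0$ for $t \in [x,0]$ in the positivity step.
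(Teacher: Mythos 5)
Your argument is correct throughout: the Taylor integral remainder gives positivity for $x\le 0$ (evenness makes $(x-t)^\ell\ge 0$ on $[x,0]$), the constant term gives $E_\ell(x)\ge 1$ for $x\ge 0$, convexity follows from $E_\ell''=E_{\ell-2}\ge 0$, and the geometric-tail plus Stirling computation yields $e^x-E_\ell(x)\le e^{-\ell}/((e^2-1)\sqrt{2\pi\ell})$ with $(e^2-1)\sqrt{4\pi}\approx 22.6>16$ at the smallest relevant $\ell=2$. Note, however, that the paper does not prove this lemma at all: it is quoted verbatim from Lemmas~1 and~2 of Radziwi{\l\l}--Soundararajan (their paper on quadratic twists of elliptic curve $L$-functions), so there is no in-paper proof to compare against. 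Your route --- Taylor remainder for the sign, geometric domination of the tail using $x/(\ell+1)\le e^{-2}$, Stirling to clean up $x^\ell/\ell!$ --- is essentially the natural proof and matches the spirit of the original; the one place to be slightly more careful in a write-up is to make explicit that the $\ell=0$ case of the quantitative bound is subsumed by the $x\le 0$ case (since then $\ell/e^2=0$), so that Stirling is only ever invoked for $\ell\ge 2$.
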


\begin{lemma}
\label{E2}  Let 
$x_1$, $\ldots$, $x_R$ be real numbers and let $C= \exp((e^{-\ell_1}+\ldots+e^{-\ell_R})/16)$, where $\ell_1$, $\ldots, \ell_R$ are
positive even integers. Then we have for any $y\ge 0$ and $0\le k\le 1$,  
\begin{align*}
y^{k} \le &  Cky \prod_{j=1}^{R} E_{\ell_j}((k-1)x_j) + C(1-k) \prod_{j=1}^{R} E_{\ell_j}(kx_j)
\\
&+ \sum_{r=0}^{R-1} \Big( Cky \prod_{j=1}^{r} E_{\ell_j}((k-1)x_j) + C(1-k)
\prod_{j=1}^{r} E_{\ell_j}(kx_j) \Big) \Big(\frac{e^2 x_{r+1}}{\ell_{r+1}}\Big)^{\ell_{r+1}}. 
\end{align*}
\end{lemma}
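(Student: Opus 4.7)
The plan is to argue by case analysis on whether every $|x_j|$ is at most $\ell_j/e^2$. The starting point in either case is a parametrised form of Young's inequality: for every real number $P$, every $y\ge 0$, and every $0\le k\le 1$, one has $y^k\le ky\,e^{(k-1)P}+(1-k)e^{kP}$, obtained by applying the classical $ab\le a^p/p+b^q/q$ with $p=1/k$, $q=1/(1-k)$ to $a=y^ke^{-k(1-k)P}$ and $b=e^{k(1-k)P}$. I will call this the \emph{master inequality}; the point is that it holds for every choice of $P\in\mathbb{R}$, which leaves me free to choose $P$ as a partial sum of the $x_j$.

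\textbf{Case 1: $|x_j|\le\ell_j/e^2$ for every $j$.} Since $|(k-1)x_j|$ and $|kx_j|$ are both bounded by $|x_j|$, both arguments lie in $[-\ell_j/e^2,\ell_j/e^2]$, so Lemma~\ref{E1} gives $e^{(k-1)x_j}\le(1+e^{-\ell_j}/16)E_{\ell_j}((k-1)x_j)$ and $e^{kx_j}\le(1+e^{-\ell_j}/16)E_{\ell_j}(kx_j)$. Taking the product over $j=1,\ldots,R$ and inserting into the master inequality with $P=\sum_j x_j$ yields
\[
y^k\le\prod_{j=1}^{R}\bigl(1+e^{-\ell_j}/16\bigr)\Bigl(ky\prod_{j=1}^{R}E_{\ell_j}((k-1)x_j)+(1-k)\prod_{j=1}^{R}E_{\ell_j}(kx_j)\Bigr).
\]
The product prefactor is bounded by $\exp(\sum_j e^{-\ell_j}/16)=C$ via $1+t\le e^t$. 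All the correction terms in the asserted bound are non-negative (since $\ell_j$ even forces $(e^2 x_j/\ell_j)^{\ell_j}\ge 0$ and $E_{\ell_j}\ge 0$ by Lemma~\ref{E1}), so the claim of Lemma~\ref{E2} follows.

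\textbf{Case 2: Some $|x_j|$ exceeds $\ell_j/e^2$.} Let $r^\ast$ be the smallest such index; then $\epsilon_{r^\ast}:=(e^2 x_{r^\ast}/\ell_{r^\ast})^{\ell_{r^\ast}}\ge 1$, and for every $j<r^\ast$ the bound $|x_j|\le\ell_j/e^2$ still holds, so Lemma~\ref{E1} still applies on these indices. I focus on just the single correction term on the right-hand side of Lemma~\ref{E2} at $r=r^\ast-1$, discarding every other (non-negative) contribution. Using the reverse direction of Lemma~\ref{E1}, i.e.\ $E_{\ell_j}((k-1)x_j)\ge e^{(k-1)x_j}/(1+e^{-\ell_j}/16)$ and its analogue with $k$, for each $j<r^\ast$, together with $C\ge\prod_{j<r^\ast}(1+e^{-\ell_j}/16)$, this single term is at least $\bigl(ky\,e^{(k-1)P_{r^\ast-1}}+(1-k)e^{kP_{r^\ast-1}}\bigr)\epsilon_{r^\ast}$, where $P_{r^\ast-1}=\sum_{j<r^\ast}x_j$. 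Since $\epsilon_{r^\ast}\ge 1$, this is at least $ky\,e^{(k-1)P_{r^\ast-1}}+(1-k)e^{kP_{r^\ast-1}}$, which by the master inequality is $\ge y^k$.

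The whole proof thus reduces to Young's inequality combined with Lemma~\ref{E1}, and I do not expect any serious obstacle. The only mildly subtle step is identifying the ``first bad index'' $r^\ast$: the correction term at $r=r^\ast-1$, paired with Lemma~\ref{E1} applied on the indices $j<r^\ast$ where it is still valid, reconstitutes a full instance of the master Young bound and therefore dominates $y^k$ all by itself.
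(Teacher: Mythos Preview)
Your proof is correct. The paper does not actually give its own proof of this lemma but cites it as \cite[Lemma~2]{Radziwill&Sound}; your argument---Young's inequality $y^k\le ky\,e^{(k-1)P}+(1-k)e^{kP}$ with $P$ a partial sum of the $x_j$, combined with Lemma~\ref{E1} on the indices where $|x_j|\le \ell_j/e^2$, and in the bad case isolating the first index $r^\ast$ where this fails so that the single correction term at $r=r^\ast-1$ already dominates $y^k$---is exactly the proof from that reference.
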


\subsection{Initial treatment of the proof of Theorem \ref{thmupperbound}}

Let $X$ be a large number and $\{ \ell_j \}_{1 \leq j \leq R}$ be a sequence of
even natural numbers defined by $\ell_1= 2\lceil 100 \log \log X\rceil$ and $\ell_{j+1} = 2 \lceil 100 \log \ell_j \rceil$ for $j \geq 1$.  Here we choose $R$ to be the largest natural number such that $\ell_R >10^4$ and observe that we have $\ell_{j} > \ell_{j+1}^2$ for all $1 \leq j \leq R-1$. Let ${ P}_1$ be the set of odd primes below $X^{1/\ell_1^2}$.  For $2\le j\le R$, we define
${ P_j}$ to be the set of primes lying in the interval $(X^{1/\ell_{j-1}^2}, X^{1/\ell_j^2}]$.   We also define
\begin{equation*}
{\mathcal P}_j(d) = \sum_{p\in P_j} \frac{1}{\sqrt{p}} \chi_{8d}(p).
\end{equation*}
 Given a real number $\alpha$, we denote
\begin{align}
\label{defN}
{\mathcal N}_j(d, \alpha) = E_{\ell_j} (\alpha {\mathcal P}_j(d)), \quad \mathcal{N}(d, \alpha) = \prod_{j=1}^{R} {\mathcal N}_j(d,\alpha).
\end{align}
 We further set for two real numbers $n, k$ satisfying $0\le k \le 1$,
\begin{equation*}
{\mathcal A}_j(d) = {\mathcal N}_j(d, (k-1) n ), \quad {\mathcal B}_j(d) = {\mathcal N}_j(d, nk ).
\end{equation*}

We apply Lemma \ref{E2} with $y=|L(\frac 12, \chi_{8d})|^n(\log d)^{-\frac n2}$ and $x_j =n{\mathcal P}_j(d)$ to obtain the following bounds for the moments of $L(\frac 12,\chi_{8d})$.
\begin{proposition} \label{Prop2}  With notations as above, we have
\begin{align}
\begin{split}
\label{upperboundforLmoment}
\Big(|L(\frac 12,\chi_{8d})|^n(\log d)^{-\frac n2} \Big)^k \le &   Ck |L(\frac12, \chi_{8d})|^n(\log d)^{-\frac n2}
\Big( \prod_{j=1}^{R} {\mathcal A}_j(d) + \sum_{r=0}^{R-1} \prod_{j=1}^{r} {\mathcal A}_j(d) \Big( \frac{e^2 n {\mathcal P}_{r+1} (d)}{\ell_{r+1}} \Big)^{\ell_{r+1}} \Big)
\\
&+ C(1-k) \Big( \prod_{j=1}^{R} {\mathcal B}_j(d) + \sum_{r=0}^{R-1}\prod_{j=1}^{r} {\mathcal B}_j(d) \Big(\frac{e^2 n {\mathcal P}_{r+1}(d)}{\ell_{r+1}}\Big)^{\ell_{r+1}}\Big).
\end{split}
\end{align}
\end{proposition}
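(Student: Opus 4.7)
The plan is to derive Proposition \ref{Prop2} as a direct instance of Lemma \ref{E2} with the specific substitutions dictated by the setup. Concretely, I take $y = |L(\tfrac12, \chi_{8d})|^n (\log d)^{-n/2}$ and $x_j = n \mathcal{P}_j(d)$ for $j = 1, \ldots, R$. Before invoking the lemma I verify its hypotheses: $y \geq 0$ because $|L(\tfrac12, \chi_{8d})|^n \geq 0$ and $\log d > 0$ throughout the range of interest, $k$ lies in $[0,1]$ by assumption, and each $\ell_j$ is a positive even integer by its very construction in the paragraphs preceding the proposition.

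\medskip
With these inputs the conclusion of Lemma \ref{E2} translates directly into \eqref{upperboundforLmoment}. Indeed, by definition \eqref{defN} we have
\begin{equation*}
E_{\ell_j}\bigl((k-1) x_j\bigr) = E_{\ell_j}\bigl((k-1)n\mathcal{P}_j(d)\bigr) = \mathcal{N}_j\bigl(d, (k-1)n\bigr) = \mathcal{A}_j(d),
\end{equation*}
and similarly $E_{\ell_j}(k x_j) = \mathcal{B}_j(d)$, while the tail factor $\bigl(e^2 x_{r+1}/\ell_{r+1}\bigr)^{\ell_{r+1}}$ becomes $\bigl(e^2 n \mathcal{P}_{r+1}(d)/\ell_{r+1}\bigr)^{\ell_{r+1}}$. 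Rearranging by pulling the $Cky$ and $C(1-k)$ factors in front of their respective products, and isolating the $r = R$ terminal piece from the sum over $r = 0, \ldots, R-1$, then yields exactly the inequality displayed.

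\medskip
Consequently there is no genuine obstacle here: Proposition \ref{Prop2} is a bookkeeping consequence of Lemma \ref{E2}, requiring no character-sum estimates or further analytic input at this stage. What makes the formulation worthwhile is the shape of the right-hand side, which is now a sum of terms each given by a product of truncated exponentials; these expand into short Dirichlet polynomials whose lengths are controlled by the $\ell_j$. After averaging over $d$, the contributions involving $\prod_j \mathcal{B}_j(d)$ will be handled via Lemma \ref{PropDirpoly}, while those involving $|L(\tfrac12, \chi_{8d})|^n \prod_j \mathcal{A}_j(d)$ (for $n = 2$) will rely on the twisted second-moment formula in Lemma \ref{Prop1}. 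The tail pieces carrying the factor $(e^2 n \mathcal{P}_{r+1}(d)/\ell_{r+1})^{\ell_{r+1}}$ are engineered so that their contribution remains small on average thanks to the rapid decay in the exponent $\ell_{r+1}$, an estimate which would fold in at the next stage rather than in the proof of the proposition itself.
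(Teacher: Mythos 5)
Your proposal is correct and is precisely the route the paper takes: the text immediately preceding Proposition \ref{Prop2} states that it follows from applying Lemma \ref{E2} with $y = |L(\tfrac12,\chi_{8d})|^n(\log d)^{-n/2}$ and $x_j = n\mathcal{P}_j(d)$, and your substitutions $E_{\ell_j}((k-1)x_j)=\mathcal{A}_j(d)$, $E_{\ell_j}(kx_j)=\mathcal{B}_j(d)$ together with the regrouping of the terminal products with the tail sum reproduce \eqref{upperboundforLmoment} exactly.
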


  Arguing similar to \cite[Section 3.4]{Radziwill&Sound}, we see that in order to prove Theorem \ref{thmupperbound}, it suffices to show that the right side of \eqref{upperboundforLmoment} averaged over $d$ is $\ll X (\log X)^{\frac{(n k)^2}{2}}$. We close this section by giving such an estimation for the terms involving with ${\mathcal B}_j(d)$.

\begin{proposition} \label{Prop3} With notations as above, we have
$$
\sumstar_{(d,2)=1} \Big( \prod_{j=1}^{R} {\mathcal B}_j(d) +
\sum_{r=0}^{R-1} \prod_{j=1}^r {\mathcal B}_j(d) \Big(\frac{e^2 n {\mathcal P}_{r+1}(d)}{\ell_{r+1}}\Big)^{\ell_{r+1}} \Big) \Phi\Big(\frac{ d}{X}\Big)
\ll X (\log X)^{\frac{(n k)^2}{2}}.
$$
\end{proposition}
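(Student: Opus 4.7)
The plan is to expand $\prod_{j=1}^R \mathcal{B}_j(d)$ into an explicit short Dirichlet polynomial in $\chi_{8d}$ and evaluate the sum over $d$ via Lemma~\ref{PropDirpoly}. The crucial structural fact is that the prime sets $P_1,\ldots,P_R$ are pairwise disjoint, so both the polynomial expansion and the ``square'' condition from the main term of Lemma~\ref{PropDirpoly} split multiplicatively across $j$, reducing the problem to $R$ one-dimensional computations. Since each $\mathcal{B}_j(d)=E_{\ell_j}(nk\mathcal{P}_j(d))$ is a polynomial of degree $\ell_j$ in $\mathcal{P}_j(d)$, it is supported on integers at most $X^{1/\ell_j}$, and the recursive bounds $\ell_j>\ell_{j+1}^2$ with $\ell_R>10^4$ yield total support on integers at most $X^{\sum_j 1/\ell_j}\leq X^{10^{-3}}$. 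Applying Lemma~\ref{PropDirpoly} term by term, the accumulated off-diagonal error is $o(X)$ once one controls the $L^1$-norm of the expansion coefficients by $\sum_{p\in P_j}p^{-1/2}\ll X^{1/(2\ell_j^2)}$.

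The diagonal contribution factorises as $\frac{2X\widehat{\Phi}(1)}{3\zeta(2)}\prod_{j=1}^R D_j$, where
\[
D_j \;=\; \sum_{0\leq m\leq \ell_j/2}\frac{(nk)^{2m}}{(2m)!}\sum_{\substack{p_1,\ldots,p_{2m}\in P_j\\p_1\cdots p_{2m}=\square}}\frac{1}{\sqrt{p_1\cdots p_{2m}}}\prod_{p\mid p_1\cdots p_{2m}}\frac{p}{p+1}.
\]
A standard pairing argument (counting $2m$-tuples that split into $m$ matched pairs, with a negligible contribution from tuples containing a prime of multiplicity $\geq 4$) gives $D_j = (1+o(1))\sum_{m\leq\ell_j/2}\bigl((nk)^2/2\bigr)^m\bigl(\sum_{p\in P_j}1/p\bigr)^m/m!$. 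The hypothesis of Lemma~\ref{E1} is verified using Lemma~\ref{RS} together with the construction of the $\ell_j$, and applying it yields $D_j=(1+o(1))\exp\bigl((nk)^2\sum_{p\in P_j}(1/p)/2\bigr)$. Multiplying across $j$ and applying Lemma~\ref{RS} to the disjoint union $\cup_j P_j$ (the set of odd primes $\leq X^{1/\ell_R^2}$) produces $\prod_j D_j\ll(\log X)^{(nk)^2/2}$, proving the stated bound for the first product.

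The remainder terms $\prod_{j\leq r}\mathcal{B}_j(d)\cdot(e^2n\mathcal{P}_{r+1}(d)/\ell_{r+1})^{\ell_{r+1}}$ are treated by the same expansion: the factor $\mathcal{P}_{r+1}(d)^{\ell_{r+1}}$ adds length at most $X^{1/\ell_{r+1}}$, so the off-diagonal error remains $o(X)$, while the diagonal evaluation picks up an extra factor $\ell_{r+1}!\cdot\bigl(\sum_{p\in P_{r+1}}1/p\bigr)^{\ell_{r+1}/2}/\bigl(2^{\ell_{r+1}/2}(\ell_{r+1}/2)!\bigr)$ over the pure main term. Combined with the prefactor $(e^2n/\ell_{r+1})^{\ell_{r+1}}$, Stirling's formula, and the estimate $\sum_{p\in P_{r+1}}1/p\ll\log(\ell_r/\ell_{r+1})$ from Lemma~\ref{RS}, each such remainder is dominated by $(C/\ell_{r+1})^{\ell_{r+1}/2}X(\log X)^{(nk)^2/2}$, which sums over $r$ to a negligible total since $\ell_{r+1}>10^4$. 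The main technical obstacle will be the combinatorial diagonal evaluation itself, specifically verifying that configurations with a prime of multiplicity $\geq 4$ contribute strictly lower order and that the local factor $\prod_{p\mid n}(p/(p+1))$ from Lemma~\ref{PropDirpoly} can be absorbed without disturbing the exponential-of-Mertens structure; these are precisely the calculations carried out in \cite[\S 3]{Radziwill&Sound}, and they transfer to the quadratic Dirichlet setting with only cosmetic modifications.
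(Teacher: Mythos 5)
Your proposal follows the same route as the paper: expand $\prod_j\mathcal{B}_j(d)$ and $\prod_{j\le r}\mathcal{B}_j(d)\,\mathcal{P}_{r+1}^{\ell_{r+1}}(d)$ as short Dirichlet polynomials (the paper's equation~(5.1)), apply Lemma~\ref{PropDirpoly} so that only the ``square'' diagonal survives, exploit disjointness of the $P_j$ to factor, and then evaluate the local sums. The paper's own proof stops essentially where you start the computation, deferring the rest to Proposition~4 of Radziwi{\l\l}--Soundararajan; your ``pairing argument'' is a combinatorial reformulation of the Euler-product-plus-Rankin calculation there, and it yields the same Mertens exponential.

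One point in the final paragraph is off, though it does not break the argument. You write that each remainder term is dominated by $(C/\ell_{r+1})^{\ell_{r+1}/2}X(\log X)^{(nk)^2/2}$, apparently treating $\sum_{p\in P_{r+1}}1/p$ as $O(1)$. But by Lemma~\ref{RS} and the recursion $\ell_{r+1}=2\lceil100\log\ell_r\rceil$, one has $\sum_{p\in P_{r+1}}1/p=2\log(\ell_r/\ell_{r+1})+O(1)\asymp\ell_{r+1}/100$, which is proportional to $\ell_{r+1}$, not bounded. Carrying this through Stirling, the remainder factor is
\[
\left(\frac{e^2n}{\ell_{r+1}}\right)^{\ell_{r+1}}\frac{\ell_{r+1}!}{2^{\ell_{r+1}/2}(\ell_{r+1}/2)!}\Big(\sum_{p\in P_{r+1}}\tfrac1p\Big)^{\ell_{r+1}/2}\asymp\Big(e^3n^2\cdot\tfrac{\sum 1/p}{\ell_{r+1}}\Big)^{\ell_{r+1}/2}\asymp c^{\ell_{r+1}/2},
\]
with $c=e^3n^2\cdot(1/100+o(1))$. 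So the decay is merely exponential in $\ell_{r+1}$ with a fixed base, not super-exponential as $(C/\ell_{r+1})^{\ell_{r+1}/2}$ would suggest. Crucially, one must verify $c<1$, which for $n=2$ gives $c\approx 4e^3/100\approx0.8$; this is precisely why the factor $100$ appears in the definition of $\ell_j$, and the argument would break for larger $n$ without enlarging that constant. Since $\ell_{r+1}>10^4$ for every $r$, $\sum_r c^{\ell_{r+1}/2}$ is still negligible, so your conclusion holds, but the intermediate estimate and its reason should be corrected.

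Two smaller remarks. First, the claim $D_j=(1+o(1))\sum_{m\le\ell_j/2}((nk)^2/2)^m(\sum_{p\in P_j}1/p)^m/m!$ is an overclaim for small $j$: the multiplicity-$\geq4$ terms and the replacement of $1/(p+1)$ by $1/p$ contribute a bounded multiplicative factor $\exp(O(\sum_{p\in P_j}1/p^2))$, which is $O(1)$ rather than $1+o(1)$ when $P_j$ contains small primes. For the stated upper bound this is immaterial, but the asymptotic equality is not quite what you get. Second, for the off-diagonal error your control should rest on the total length bound $n\le X^{1/1000}$ combined with the polynomial (in $\log X$) size of the coefficients, rather than on $\sum_{p\in P_j}p^{-1/2}$; the coefficient of a given $n$ in the Dirichlet polynomial can be as large as a fixed power of $\log X$, but multiplied against $X^{1/2+1/2000+\epsilon}$ from Lemma~\ref{PropDirpoly} this is still $o(X)$.
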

\begin{proof}
 Let $w(n)$
be the multiplicative function defined by $w(p^{\alpha}) = \alpha!$ for prime powers $p^{\alpha}$ and let $\Omega(n)$ denote
the number of distinct prime powers dividing $n$.  We also define functions $b_j(n), p_j(n)$ for $1 \leq j \leq R$ such that $b_j(n), p_j(n)=0$ or $1$, and we have $b_j(n)=1$ ($p_j(n)=1$) if and only if $n$ is composed of at most (exactly, counted with multiplicity) $\ell_j$ primes, all from the interval $P_j$ . Using these notations, we see that
\begin{equation}
\label{5.1}
{\mathcal B}_j(d) = \sum_{n_j} \frac{1}{\sqrt{n_j}} \frac{(nk)^{\Omega(n_j)}}{w(n_j)}  b_j(n_j) \chi_{8d}(n_j), \quad \frac{1}{\ell_j!} {\mathcal P}_j(d)^{\ell_j} = \sum_{n_j} \frac{1}{w(n_j)\sqrt{n_j}} p_j(n_j) \chi_{8d}(n_j), \quad 1\le j\le R.
\end{equation}
    We note here that both ${\mathcal B}_j(d)$ and ${\mathcal P}_j(d)^{\ell_j}$ are short Dirichlet polynomials since $b_j(n_j), p_j(n_j)=0$ unless $n_j \leq (X^{1/\ell_j^2})^{\ell_j}=X^{1/\ell_j}$. It follows that the expressions $\prod_{j=1}^{R} {\mathcal B}_j(d), \prod_{j=1}^{r} {\mathcal B}_j(d) {\mathcal P}_{r+1}^{\ell_{r+1}}(d)$ are all short Dirichlet polynomials of length at most $X^{1/\ell_1+ \ldots +1/\ell_R} < X^{1/1000}$.

 We expand the term $\prod_{j=1}^{r} {\mathcal B}_j(d) {\mathcal P}_{r+1}^{\ell_{r+1}}(d)$ for some $0\le r\le R-1$ using \eqref{5.1} and apply Lemma  \ref{PropDirpoly} to estimate it. By doing so, we may ignore the error term in Lemma \ref{PropDirpoly} as both ${\mathcal B}_j$ and ${\mathcal P_j}^{\ell_j}(d)$ are short Dirichlet polynomials. Considering the main term contributions from Lemma \ref{PropDirpoly}, we see that
\begin{align*}
 \prod_{j=1}^{r} {\mathcal B}_j(d) {\mathcal P}_{r+1}^{\ell_{r+1}} \ll X
 &\prod_{j=1}^{r} \Big( \sum_{n_j =\square} \frac{1}{\sqrt{n_j}} \frac{(nk)^{\Omega(n_j)}}{w(n_j)} \prod_{p|n_j} \Big( \frac p{p+1}\Big) b_j(n_j)\Big)\\
 &\times  \Big( \ell_{r+1}! \sum_{n_{r+1}=\square} \frac{1}{w(n_{r+1}) \sqrt{n_{r+1}}}
\prod_{p|n_{r+1}} \Big(\frac p{p+1} \Big) p_{r+1}(n_{r+1}) \Big).
\end{align*}

 The proof of the proposition now follows by arguing in the same way as in the proof of Proposition 4 in \cite{Radziwill&Sound}.
\end{proof}

\section{Proof of Theorem \ref{thmupperbound}}

  In view of our discussions in the previous section, it remains to show that the right side of \eqref{upperboundforLmoment} averaged over $d$  for the terms involving with ${\mathcal A}_j(d)$ is also $\ll X (\log X)^{\frac{(n k)^2}{2}}$ for $n=2$. As our approach here may be applied to treat other values of $n$, we shall retain the symbol $n$ in most of the places in the rest of the section instead of specitying it to be $2$. Thus, to conclude the proof of Theorem \ref{thmupperbound}, it suffices to establish the following result.
\begin{proposition} \label{Prop4} With notations as above, we have for $n=2$,
\begin{align}
\label{Aestmation}
\sumstar_{(d,2)=1} |L(\frac 12,\chi_{8d})|^n \Big(  \prod_{j=1}^{R} {\mathcal A}_j(d) +
\sum_{r=0}^{R-1} \prod_{j=1}^r {\mathcal A}_j(d) \Big(\frac{e^2n {\mathcal P}_{r+1}(d)}{\ell_{r+1}}\Big)^{\ell_{r+1}} \Big) \Phi\Big(\frac{ d}{X}\Big)
\ll X (\log X)^{ \frac {(nk)^2+n}{2}}.
\end{align}
\end{proposition}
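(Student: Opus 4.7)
My plan is to exploit that, since $n=2$, $|L(\tfrac12,\chi_{8d})|^2 = L(\tfrac12,\chi_{8d})^2$ is a nonnegative quantity whose twisted second moment is supplied by Lemma~\ref{Prop1}, while each factor $\mathcal A_j(d) = E_{\ell_j}((k-1)n\,\mathcal P_j(d))$ is nonnegative by Lemma~\ref{E1}. Setting $\alpha = (k-1)n = 2(k-1)$, I would expand each $\mathcal A_j(d)$ as a short Dirichlet polynomial in the style of \eqref{5.1} (with $nk$ replaced by $\alpha$), so that multiplying them together yields
\[
\prod_{j=1}^R \mathcal A_j(d) = \sum_l \frac{c(l)}{\sqrt{l}}\chi_{8d}(l), \qquad c(l) = \prod_j \frac{\alpha^{\Omega(l_j)}}{w(l_j)} b_j(l_j),
\]
a Dirichlet polynomial of length at most $X^{1/1000}$; the products $\prod_{j=1}^r \mathcal A_j(d)\,\mathcal P_{r+1}^{\ell_{r+1}}(d)$ are handled identically.

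Next I would substitute this expansion into the sum over $d$, exchange orders, and apply Lemma~\ref{Prop1} to each fixed $l$. Summing the error term $O(X^{3/4+\varepsilon}l_1^{1/2+\varepsilon})$ over all $l\le X^{1/1000}$ gives a total of $O(X^{3/4+\varepsilon+1/500})$, which is negligible. What survives is
\[
\frac{D\widehat\Phi(1)}{36\zeta(2)}\,X\sum_l \frac{c(l)}{\sqrt{l}}\cdot\frac{d(l_1)}{\sqrt{l_1}}\cdot\frac{l_1}{\sigma(l_1)h(l)}\cdot\Bigl(\log^3(X/l_1) - 3\sum_{p\mid l_1}\log^2 p\cdot\log(X/l_1) + O(l)\Bigr),
\]
with $l = l_1 l_2^2$ and $l_1$ squarefree.

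The crux of the argument is the Euler-product evaluation of the sum on $l$. By multiplicativity it factors as $\prod_p(1+F_p)$, where the local factor $F_p$ combines the even-exponent contributions (at which $l_1=1$ and the weight is $\alpha^{2m}/((2m)!\,p^m h(p^{2m}))$) with the odd-exponent contributions (where the $d(l_1)/\sigma(l_1)$ factor contributes $2/(p+1)$). A direct computation gives
\[
F_p = \frac{2\alpha}{(p+1)h(p)} + \frac{\alpha^2}{2p\,h(p^2)} + O\bigl(p^{-2}\bigr) = \frac{2\alpha + \alpha^2/2}{p} + O\bigl(p^{-2}\bigr).
\]
Lemma~\ref{RS} then yields $\prod_p(1+F_p)\asymp(\log X)^{2\alpha + \alpha^2/2}$, and with $\alpha = 2(k-1)$ one computes $2\alpha + \alpha^2/2 = 2(k-1)(k+1) = 2k^2 - 2$. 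Combined with the leading $\log^3 X$ arising from Lemma~\ref{Prop1}, this produces exactly $X(\log X)^{2k^2+1} = X(\log X)^{((nk)^2+n)/2}$, matching the bound \eqref{Aestmation}.

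What will remain is to bound the subordinate contributions: the $-3\sum_{p\mid l_1}\log^2 p\cdot\log(X/l_1)$ correction, the $O(l)$ remainder with its $\Lambda_j/\Lambda_k$ sums and $\sum_{p\mid l}\log^2 p$ summand, and the tail factors $(e^2 n\mathcal P_{r+1}/\ell_{r+1})^{\ell_{r+1}}$. The tail pieces are absorbed by the same Dirichlet-polynomial argument used in Proposition~\ref{Prop3}. The main obstacle I anticipate is the $\log^2 p$ correction: because $\sum_{p\le z}(\log^2 p)/p\asymp(\log X)^2$ by Lemma~\ref{RS}, this term is not of lower order but of the \emph{same} order as the leading main contribution, so it must be tracked explicitly rather than dismissed as an error---one then checks that $\log X\cdot(\log X)^2\cdot(\log X)^{2k^2-2} = (\log X)^{2k^2+1}$ still fits inside the target bound.
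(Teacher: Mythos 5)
Your overall strategy follows the paper's: expand $\prod_j\mathcal{A}_j(d)$ into a short Dirichlet polynomial, apply the twisted second moment Lemma~\ref{Prop1}, and recover the exponent from the local densities. Your Euler-product bookkeeping $F_p = \frac{2\alpha+\alpha^2/2}{p}+O(p^{-2})$ with $\alpha=n(k-1)$, giving $2\alpha+\tfrac{\alpha^2}{2}=2k^2-2$ at $n=2$, is correct and agrees with the paper's intermediate exponent $\tfrac{(n(k-1))^2}{2}+n^2(k-1)$; combined with the $\log^3 X$ from Lemma~\ref{Prop1} this yields $(\log X)^{2k^2+1}$, as required.

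There are, however, two genuine gaps. First, the claim ``by multiplicativity it factors as $\prod_p(1+F_p)$'' is not correct as stated: the coefficient $c(l)=\prod_j\frac{\alpha^{\Omega(l_j)}}{w(l_j)}b_j(l_j)$ carries the truncation indicators $b_j$, which kill any block $l_j$ with more than $\ell_j$ prime factors and therefore destroy multiplicativity. To pass to an Euler product you must first drop the $b_j$'s and then control the error, which the paper does via Rankin's trick (inserting $2^{\Omega(n_j)-\ell_j}\ge 1$ when $\Omega(n_j)>\ell_j$) to show the discrepancy is $\ll 2^{-\ell_j/2}$ times the main term; see the chain \eqref{errorbound}--\eqref{6.3}. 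You also need to check (as the paper notes) that each local factor $1+F_p$ is actually positive for $p\ge 3$ and $\alpha\in[-2,0]$, since $F_p<0$ for small primes is not a priori ruled out.

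Second, dismissing the tail pieces $\sum_{r}\prod_{j\le r}\mathcal{A}_j(d)\bigl(\tfrac{e^2 n\mathcal{P}_{r+1}(d)}{\ell_{r+1}}\bigr)^{\ell_{r+1}}$ as ``absorbed by the same Dirichlet-polynomial argument used in Proposition~\ref{Prop3}'' skips the crucial point: each $r$-summand must produce a factor that decays in $r$ so that the sum over $r$ stays $O(1)$. The paper obtains $\ll e^{-\ell_{r+1}/7}\,X(\log X)^{2k^2+1}$ per $r$ by applying Rankin's trick with weight $10^{-\ell_{r+1}}$ to the $\mathcal{P}_{r+1}^{\ell_{r+1}}$ block, and then uses $\ell_j>\ell_{j+1}^2$, $\ell_R>10^4$ to make $\sum_r e^{-\ell_{r+1}/7}\ll 1$. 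Without such a decay the sum over $r$ is not controlled, and Proposition~\ref{Prop3} itself---which has no $L$-factor and relies on Lemma~\ref{PropDirpoly} rather than Lemma~\ref{Prop1}---does not transfer directly. As a minor aside, your concern about the $-3\sum_{p\mid l_1}\log^2 p\cdot\log(X/l_1)$ term is a bit overstated: since $l_1\le X^{1/1000}$ one has $\sum_{p\mid l_1}\log^2 p\le(\log l_1)^2\ll 10^{-6}(\log X)^2$, so this piece is pointwise dominated by $\log^3(X/l_1)$ and only contributes the same power of $\log X$ with a far smaller constant.
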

 In the remaining of this section, we give a proof of Proposition \ref{Prop4}. First note that we may replace $|L(\frac 12,\chi_{8d})|^n$ by $L(\frac 12,\chi_{8d})^n$ when $n=2$ since $L(\frac 12,\chi_{8d})$ is real.  As the arugments are similar, it suffices to show that
\begin{align}
\label{estAP}
\sumstar_{(d,2)=1} &L(\tfrac 12,\chi_{8d})^n \sum_{r=0}^{R-1}  \prod_{j=1}^{r} {\mathcal A}_j(d)
\Big(\frac{e^2n {\mathcal P}_{r+1}(d)}{\ell_{r+1}}\Big)^{\ell_{r+1}} \Phi\Big(\frac{d}{X}\Big) \ll X (\log X)^{ \frac {(nk)^2+n}{2}}.
\end{align}

Note first that we have
$$
{\mathcal A}_j(d) = \sum_{n_j} \frac{1}{\sqrt{n_j}} \frac{(n (k-1))^{\Omega(n_j)}}{w(n_j)} b_j(n_j) \chi_{8d}(n_j), \quad 1\le j\le R.
$$
 Analogue to our discussions above, the product
$\prod_{j=1}^r {\mathcal A}_j(d) {\mathcal P}_{{r+1}}^{\ell_{r+1}}(d)$ for all $0\le r\le R-1$ are short Dirichlet polynomials of
length at most $X^{1/1000}$.

  We now apply Lemma \ref{Prop1} to evaluate $\prod_{j=1}^{r} {\mathcal A}_j(d) {\mathcal P}_{r+1}^{\ell_{r+1}}$(d) for some $0\le r\le R-1$ by
expanding it into Dirichlet series. Once again we may focus only on the main term to see that, upon writing $n_j=(n_j)_1(n_j)_{2}^2$ with $(n_j)_{1}$ being square-free, 
\begin{align}
\label{Aestmationexpansion}
\begin{split}
& \sumstar_{(d,2)=1} L(\tfrac 12,\chi_{8d})^n \prod_{j=1}^{r} {\mathcal A}_j(d)
{\mathcal P}_{r+1}^{\ell_{r+1}}(d) \Phi\Big(\frac{d}{X}\Big) \\
\ll & X   \sum_{n_1, \cdots, n_{r+1} }  \Big( \prod_{j=1}^{r} \frac{1}{\sqrt{n_j(n_j)_{1}}}
\frac{(n (k-1))^{\Omega(n_j)}}{w(n_j) } b_j(n_j) \frac {d((n_j)_{1})(n_j)_{1}}{\sigma((n_j)_{1})h(n_{j})}  \Big) \Big(   \frac{\ell_{r+1}!}{\sqrt{n_{r+1}(n_{r+1})_1}} \frac{p_{r+1}(n_{r+1})}{w(n_{r+1})}\frac {d((n_{r+1})_{1})(n_{r+1})_{1}}{\sigma((n_{r+1})_{1})h(n_{r+1})} \Big) \\
& \times \Big (\log^3\Big ( \frac {X}{(n_1)_1 \cdots (n_{r+1})_1} \Big )-3\sum_{\substack{p | (n_1)_1 \cdots (n_{r+1})_1 }} \log^2 p\log \Big (\frac {X}{(n_1)_1 \cdots (n_{r+1})_1} \Big )+O(n_1 \cdots n_{r+1}) \Big ).
\end{split}
\end{align}
  As the estimations are similar, we may consider the above sums involving with the terms $\log^3(X/((n_1)_1 \cdots (n_{r+1})_1))=(\log X-\log ((n_1)_1 \cdots (n_{r+1})_1))^3$  only. Upon expanding, we observe that we can write $\log^3(X/((n_1)_1 \cdots (n_{r+1})_1))$ as linear combinations of the sum:
\begin{align}
\label{typicalform}
  C(m_0, \ldots, m_{r+1})(\log X)^{m_0} \sum_{\substack{ p_i | (n_i)_1 \\ 1 \leq i \leq r+1}}  \prod_{1 \leq i \leq r+1} (\log p_i)^{m_i},
\end{align}
   where $m_j, 0 \leq j \leq r+1$ are non-negative integers satisfying $\sum_{0 \leq j \leq r+1}m_j=3$ and where $C(m_0, \ldots, m_{r+1})$ are bounded constants. Without loss of generality, we may group terms to consider the total contribution to \eqref{Aestmationexpansion} from all terms of the above form corresponding to $m_{i_1}=m_{i_2}=m_{i_3}=1$ for some $1 \leq i_1 < i_2< i_3 \leq r+1$. For example, when the corresponding $p_{i_1} \in P_1, p_{i_2} \in P_2, p_{i_3} \in P_{r+1}$, the contribution is
\begin{align}
\label{6.1}
\begin{split}
 \ll & \sum_{l_1, l_2, l_3 \geq 0}  \prod^3_{s=1}\Big( \frac{\log p_{i_s}}{p^{l_s+1}_{i_s}}\frac{|(n (k-1))|^{2l_s+1}}{(2l_s+1)!} \frac {n p_{i_s}}{(p_{i_s}+1)h(p^{2l_s+1}_{i_s})}\Big) \\
& \times \prod_{j=1}^{r} \Big( \sum_{(n_j, p_{i_1}p_{i_2})=1  } \frac{1}{\sqrt{n_j (n_{j})_{1}}}
\frac{(n (k-1))^{\Omega(n_j)}}{w(n_j) } \widetilde{b}_{j, l_1, l_2}(n_j) \frac {d((n_j)_{1})(n_j)_{1}}{\sigma((n_j)_{1})h(n_{j})} \Big)  \\
&\times \Big( \ell_{r+1}! \sum_{(n_{r+1}, p_{i_3})=1 } \frac{1}{\sqrt{n_{r+1}(n_{r+1})_{1}}} \frac{p_{r+1}(n_{r+1}p^{2l_3+1}_{i_3})}{w(n_{r+1})}\frac {d((n_{r+1})_{1})(n_{r+1})_{1}}{\sigma((n_{r+1})_{1})h(n_{r+1})} \Big),
\end{split}
\end{align}
  where we define $\widetilde{b}_{j, l_1, l_2}(n_j)=b_j(n_jp^{l_j}_{i_j})$ for $j=1,2$ and $\widetilde{b}_{j, l_1, l_2}(n_j)=b_{j}(n_j)$ otherwise.

  Let us consider the sum over $n_1$ in \eqref{6.1}.  If we replace the factor $\widetilde{b}_{l, l_1, l_2}(n_1)$ by $1$, then the sum becomes
\begin{align}
\label{6.2}
\begin{split}
& \prod_{\substack{p\in P_1 \\ (p,p_{i_1})=1}} \Big( \sum_{j=0}^{\infty} \frac{1}{p^j} \frac{(n (k-1))^{2j}}{(2j)!h(p^{2j})} + \sum_{j=0}^{\infty} \frac{1}{p^{j+1}}
\frac{(n (k-1))^{2j+1}}{(2j+1)!}\frac {n p}{(p+1)h(p^{2j+1})}\Big) \\
\ll & \Big ( \prod_{\substack{p\in P_1 \\ (p,p_{i_1})=1}}C(p) \Big ) \times \exp \Big ( \big(\frac {(n (k-1))^{2}}{2}+ n^2 (k-1) \big )\sum_{\substack{p\in P_1}}\frac 1p \Big ),
\end{split}
\end{align}
  where for some constant $A$ independent of $p$, 
\begin{align*}
\begin{split}
 C(p)=& \exp(\frac A{p^2}) \Big (1- \big(\frac {(n (k-1))^{2}}{2}+n^2 (k-1) \big )\frac 1p \Big ) \Big( \sum_{j=0}^{\infty} \frac{1}{p^j} \frac{(n (k-1))^{2j}}{(2j)!h(p^{2j})}  + \sum_{j=0}^{\infty} \frac{1}{p^{j+1}}
\frac{(n (k-1))^{2j+1}}{(2j+1)!}\frac {n p}{(p+1)h(p^{2j+1})}\Big).
\end{split}
\end{align*}
   Note here that $C(p)$ is well-defined as one checks readily checked that each factor in the above product is positive for $n= 2, 0 \leq k \leq 1$ and $p \geq 3$. Meanwhile, we note that the left side of \eqref{6.2} is also 
\begin{align}
\label{Eulerprodlowerbound}
\begin{split}
 \gg \exp \Big ( \big(\frac {(n (k-1))^{2}}{2}+ n^2 (k-1) \big )\sum_{\substack{p\in P_1}}\frac 1p \Big ).
\end{split}
\end{align}

  On the other hand, using Rankin's trick by noticing that $2^{\Omega(n_1)-\ell_1}\ge 1$ if $\Omega(n_1) > \ell_1$,  we see that the error introduced by replacing $\widetilde{b}_{l, l_1, l_2}(n_1)$ with $1$ does not exceed
\begin{align}
\label{errorbound}
\begin{split}
 & \sum_{n_1} \frac{1}{\sqrt{n_1(n_{1})_{1}}} 
\frac{|n(k-1)|^{\Omega(n_1)}}{w(n_1) } 2^{\Omega(n_1)-\ell_1} \frac {d((n_1)_{1})(n_1)_{1}}{\sigma((n_1)_{1})h(n_{1})} \\
\le & 2^{-\ell_1} \prod_{\substack{p\in P_1 \\ (p,p_{i_1})=1}} \Big( 1+\sum_{j=1}^{\infty} \frac{1}{p^j} \frac{(n (k-1))^{2j}2^{2j}}{(2j)!} \frac {1}{h(p^{2j})} + \sum_{j=0}^{\infty} \frac{1}{p^{j+1}}
\frac{|n (k-1)|^{2j+1}2^{2j+1}}{(2j+1)!}\frac {n p}{(p+1)h(p^{2j+1})}\Big) \\
\ll & 2^{-\ell_1}
\exp\Big( \big (2(n(k-1))^2+2n^2(1-k)\big ) \sum_{p\in P_1} \frac{1}{p} \Big).
\end{split}
\end{align}

  We deduce from \eqref{6.2}, \eqref{Eulerprodlowerbound} and \eqref{errorbound} that the error term is
\begin{align}
\label{errorbound1}
\ll & 2^{-\ell_1}
\exp\Big( \big (\frac 32(n(k-1))^2+3n^2(1-k)\big ) \sum_{p\in P_1} \frac{1}{p} \Big) \Big ( \prod_{\substack{p\in P_1 \\ (p,p_{i_1})=1}}C(p) \Big ) \times \exp \Big ( \big(\frac {(n (k-1))^{2}}{2}+n^2 (k-1) \big )\sum_{\substack{p\in P_1}}\frac 1p \Big ).
\end{align}

 Note that Lemma \ref{RS} implies that for all $1 \leq j \leq R$, we have $\sum_{p\in P_j} 1/p \leq 2\log \ell_{j-1} \leq \ell_j/36$ from our definition on $\ell_j$. We obtain from this and \eqref{errorbound1} that for $n = 2$, we have
\begin{align}
\label{6.3}
\begin{split}
 & \sum_{(n_1, p_{i_1})=1  } \frac{1}{\sqrt{n_1 (n_{1})_1}}
\frac{(n (k-1))^{\Omega(n_1)}}{w(n_1) } \widetilde{b}_{1, l_1, l_2}(n_1) \frac {d((n_1)_{1})(n_1)_{1}}{\sigma((n_1)_{1})h(n_{1})} \\
\ll &   (1+O(2^{-\ell_1/2}))\Big ( \prod_{\substack{p\in P_1 \\ (p,p_{i_1})=1}}C(p) \Big ) \times \exp \Big ( \big(\frac {(n (k-1))^{2}}{2}+n^2 (k-1) \big )\sum_{\substack{p\in P_1}}\frac 1p \Big ) .
\end{split}
\end{align}
 We may also establish similar estimations for sums over $n_j, 2 \leq j \leq r$.

Next, we apply Rankin's trick again to see that the contribution of the $n_{r+1}$ terms in \eqref{6.1} is
$$
\le \ell_{r+1}! 10^{-\ell_{r+1}} \prod_{p\in P_{r+1}}
\Big( \sum_{j=0}^{\infty}\frac{10^{2j}}{p^j (2j)!h(p^{2j})}+ \sum_{j=1}^{\infty}
\frac{ 10^{2j+1} }{p^{j+1} (2j+1)!} \frac {n p}{(p+1)h(p^{2j+1})}\Big).
$$
We apply Lemma \ref{RS}, the estimation $\ell_{r+1}! \le \ell_{r+1} (\ell_{r+1}/e)^{\ell_{r+1}}$ and the definition of $\ell_{r+1}$ to see that the above is
$$
\ll \ell_{r+1} \Big(\frac{\ell_{r+1}}{10e}\Big)^{\ell_{r+1}} \exp\Big(70 \sum_{p\in P_{r+1}} \frac{1}{p}\Big)
 \ll \ell_{r+1} \Big(\frac{\ell_{r+1}}{10e}\Big)^{\ell_{r+1}} \exp(\tfrac 57 \ell_{r+1}).
$$

Combining the this with \eqref{6.1} and \eqref{6.3}, we conclude that the contribution from all terms of the forms given in \eqref{typicalform} corresponding to $m_{i_1}=m_{i_2}=m_{i_3}=1$ for some $1 \leq i_1 < i_2< i_3 \leq r+1$ to the left side of \eqref{estAP} is
\begin{align*}
\ll & {X} e^{-\ell_{r+1}/7}\prod_{1 \leq j \leq r} \Big(1+O(2^{-\ell_j/2})\Big ) \prod_{\substack{p\in \bigcup_{j=1}^{r} P_j}}C(p)  \times \exp \Big ( \big(\frac {(n (k-1))^{2}}{2}+n^2 (k-1) \big )\sum_{\substack{p\in \bigcup_{j=1}^{r} P_j }}\frac 1p \Big ) \\
& \times \Big(\sum_{p\in \bigcup_{j=1}^{r} P_j}\sum_{l\geq 0}  \frac{ \log p}{p^{l+1}}\frac{|(n (k-1))|^{2l+1}}{(2l+1)!} \frac {n p}{(p+1)h(p^{2l+1})} \Big )^3  \\
\ll & e^{-\ell_{r+1}/7} X (\log X)^{ \frac{(n (k-1))^{2}}{2}+n^2 (k-1)+3 },
\end{align*}
 by using Lemma \ref{RS} while noting that $\displaystyle \prod_p C(p) \ll 1$ and $\displaystyle \prod_{1 \leq j \leq r} \Big(1+O(2^{-\ell_j/2})\Big ) \ll 1$ since $\ell_{j} > \ell_{j+1}^2$.

  Summing over $r$, we deduce that the estimation in \eqref{estAP} is valid and this completes the proof of the proposition.

\section{A further remark}

  In this section we describe a variant on the principle of Radziwi{\l\l} and Soundararajan mentioned in Section \ref{sec 1} for obtaining upper bounds of moments of $L$-functions. This is based on the following simple observation.
\begin{lemma}
\label{E0}  Let $\ell$ be a non-negative even integer.  For all real number $x$, let $E_{\ell}(x)$ be defined in \eqref{E_ell}. We have
$$
 E_{\ell}(x)E_{\ell}(-x) \geq 1.
$$
\end{lemma}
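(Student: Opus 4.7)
The plan is to show this by a direct calculus argument: let $f(x) = E_\ell(x)E_\ell(-x)$ and verify that $x = 0$ is a global minimum of $f$, at which the value is $1$.

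First I would observe that $f(0) = E_\ell(0)^2 = 1$, so it suffices to prove $f'(x)$ has the same sign as $x$ on all of $\mathbb{R}$. Since $E_\ell'(x) = E_{\ell-1}(x)$, writing $E_{\ell-1}(x) = E_\ell(x) - x^\ell/\ell!$ and similarly for $-x$ gives, after cancellation,
\begin{equation*}
f'(x) = E_{\ell-1}(x)E_\ell(-x) - E_\ell(x)E_{\ell-1}(-x) = \frac{(-x)^\ell}{\ell!}E_\ell(x) - \frac{x^\ell}{\ell!}E_\ell(-x).
\end{equation*}
Because $\ell$ is even, $(-x)^\ell = x^\ell$, so this simplifies to the clean identity
\begin{equation*}
f'(x) = \frac{x^\ell}{\ell!}\bigl(E_\ell(x) - E_\ell(-x)\bigr).
\end{equation*}

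Next I would analyze the sign of the right-hand side. The factor $x^\ell/\ell!$ is non-negative since $\ell$ is even. The difference $E_\ell(x) - E_\ell(-x)$ is twice the odd part of $E_\ell$, namely $2\sum_{\substack{0 \leq j \leq \ell \\ j \text{ odd}}} x^j/j!$, which is an odd polynomial in $x$ with strictly positive coefficients; hence it has the same sign as $x$. It follows that $f'(x) \geq 0$ for $x \geq 0$ and $f'(x) \leq 0$ for $x \leq 0$, so $f$ attains its minimum at $x = 0$ with $f(0) = 1$, giving $E_\ell(x)E_\ell(-x) \geq 1$ for every real $x$.

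There is no real obstacle here; the only subtle point is making sure the parity hypothesis on $\ell$ is actually used, which it is in collapsing $(-x)^\ell$ to $x^\ell$ so that $f'(x)$ factors as $x^\ell$ times an odd function. As a sanity check, the case $\ell = 2$ gives $E_2(x)E_2(-x) = 1 + x^4/4$, which is manifestly $\geq 1$, consistent with the proposed argument.
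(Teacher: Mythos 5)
Your proof is correct, but it takes a genuinely different route from the paper's. The paper expands $f_\ell(x) := E_\ell(x)E_\ell(-x) - 1$ as a polynomial, observes that it is even and that (by the binomial theorem applied to the middle terms) only powers $x^{2j}$ with $2j > \ell$ survive, and then verifies that each surviving coefficient is non-negative by reducing it to an alternating partial binomial sum $-\frac{2}{(2(k+j))!}\sum_{i=0}^{2j-1}\binom{2(k+j)}{i}(-1)^i \ge 0$, which it justifies by citing an external inequality. Your argument instead differentiates $f(x) = E_\ell(x)E_\ell(-x)$, exploits $E_\ell' = E_{\ell-1}$ and the identity $E_{\ell-1}(x) = E_\ell(x) - x^\ell/\ell!$ to collapse $f'$ to $\frac{x^\ell}{\ell!}\bigl(E_\ell(x) - E_\ell(-x)\bigr)$ (using the evenness of $\ell$ so that $(-x)^\ell = x^\ell$), and then reads off that this has the sign of $x$ since $E_\ell(x)-E_\ell(-x)$ is an odd polynomial with positive coefficients. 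This makes $x=0$ a global minimum with value $1$. Your route is more elementary and fully self-contained — it avoids the combinatorial identity and the citation entirely — while the paper's route gives slightly more structural information (an explicit description of which monomials occur in $f_\ell$ and the non-negativity of all their coefficients). Both are valid proofs; yours is arguably the cleaner one for the purpose at hand, though one should note the edge case $\ell = 0$, where the odd part is the empty sum and $f$ is identically $1$, which your argument handles correctly with the convention $E_{-1} \equiv 0$.
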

\begin{proof}
  We expand out the product $f_{\ell}(x) :=E_{\ell}(x)E_{\ell}(-x)-1$ as a polynomial to see that it suffices to show that the coefficients of $x^i, 0 \leq j \leq 2\ell$ are all non-negative. Note that the function $f_{\ell}(x)$ is even so that only even powers of $x$ appear in the expansion. Furthermore, it follows from the binomial theorem that the only non-zero even powers of $x$ involved are those of the form $x^{2j}$ for $2j > l$. We then set $\ell=2k$ to see that the coefficient of $x^{2(k+j)}$ for some integer $j>0$ is given by
$$
  \frac 1{(2(k+j))!}\sum^{2k}_{i=2j} \binom {2(k+j)}{i}(-1)^i=  -\frac 2{(2(k+j))!}\sum^{2j-1}_{i=0} \binom {2(k+j)}{i}(-1)^i \geq 0,
$$
  where the last inequality above follows from \cite[(6.6)]{FI10} and this completes the proof.
\end{proof}

  Now, for two real numbers $n, k$ with $0<k<1$, we apply Lemma \ref{E0} to see that
\begin{align*}
 & \sumstar_{(d,2)=1}|L(\half, \chi_{8d})|^{nk}  \Phi(\frac dX) \leq  \sumstar_{(d,2)=1}|L(\half, \chi_{8d})|^{nk}
 {\mathcal N}(d, n(k-1))^{k}{\mathcal N}(d, n(1-k))^{k} \Phi(\frac dX),
\end{align*}
 where we recall that the definition of ${\mathcal N}(d, \alpha)$ is given in \eqref{defN}.
  We further apply H\"older's inequality to the right side expression above to deduce that
\begin{align*}
 & \sumstar_{(d,2)=1}|L(\half, \chi_{8d})|^{nk}  \Phi(\frac dX)  \leq  \Big ( \sumstar_{(d,2)=1}|L(\half, \chi_{8d})|^n {\mathcal N}(d, n(k-1))  \Phi(\frac dX)\Big )^{k}\Big ( \sumstar_{(d,2)=1}{\mathcal N}(d, n(1-k))^{k/(1-k)} \Phi(\frac dX) \Big)^{1-k}.
\end{align*}

   This approach can be applied to obtain upper bounds for the $k$-th moment of $L$-functions. In particalur, it is convenient to study the $\half$-th moment as it suffices to evaluate the average over $d$ of $|L(\half, \chi_{8d})| {\mathcal N}(d, -\half)$ and ${\mathcal N}(d, \half)$ by taking $n=1, k=1/2$ above. If we assume that $L(\half, \chi_{8d}) \geq 0$ (which follows from GRH), then we just need to consider $L(\half, \chi_{8d}) {\mathcal N}(d, -\half)$,  which is relatively simpler compared to our work above. The same method applies equally to the study on the moments of quadratic twists of $L$-functions attached to elliptic curves (as we know in this case the corresponding $L$-functions have non-negative values at the central point).

   More generally, when $k=1/m$ with $m>2$ a positive integer, we denote $\{ L(s, f)  \}_{f \in \mathcal F}$ for a general family of $L$-functions, we may apply Lemma \ref{E0} (with a suitable adjustment on the defintion of ${\mathcal N}$) to see that
\begin{align*}
 & \sum_{f \in \mathcal F}|L(\half, f)|^{k}
\leq    \sum_{f \in \mathcal F}|L(\half, f)|^{k}
 {\mathcal N}(f, k-1)^{k}\Big ( \prod^{m-2}_{i=1}\big ({\mathcal N}(f, 1-ik){\mathcal N}(f, (i+1)k-1)\big)^{k} \Big ){\mathcal N}(f, 1-(m-1)k)^{k}  .
\end{align*}
  Applying H\"older's inequality $m-1$ times to the right side expression above, we deduce that
\begin{align*}
\sum_{f \in \mathcal F}|L(\half, f)|^{k} \leq &  \Big ( \sum_{f \in \mathcal F}|L(\half, f)| {\mathcal N}(f, k-1) \Big )^{k}\prod^{m-2}_{i=1}\Big ( \sum_{f \in \mathcal F}{\mathcal N}(f, 1-ik){\mathcal N}(f, (i+1)k-1)  \Big)^{k}  \Big ( \sum_{f \in \mathcal F}{\mathcal N}(f, 1-(m-1)k)  \Big)^{k}.
\end{align*}
 It follows that in order to obtain upper bounds for the $1/m$-th moment of the corresponding family, we only need to be able to evaluate the average over $f \in \mathcal F$ of $|L(\half, f)| {\mathcal N}(f, k-1)$ and quantities involving with products of at most two copies of ${\mathcal N}$. The above approach can also be adapted to treat moments of the Riemann zeta function on the critical line. We shall however not go any further in this direction here.

\vspace*{.5cm}

\noindent{\bf Acknowledgments.} P. G. is supported in part by NSFC grant 11871082.

\bibliography{biblio}
\bibliographystyle{amsxport}

\vspace*{.5cm}

\end{document}